\newcommand{\supp}{\mathop{\mathrm{supp}}\nolimits}
\newcommand{\Fix}{\mathop{\mathrm{Fix}}\nolimits}
\newcommand{\cl}{\mathop{\mathrm{cl}}\nolimits}
\title{Higher dimensional Thompson groups have Serre's property FA}
\author{Motoko Kato \thanks{Graduate School of Mathematical Sciences, The University of Tokyo, 3-8-1 Komaba Meguro-ku, 153-8914, TOKYO, JAPAN}
\thanks{email: kmotoko@ms.u-tokyo.ac.jp}
}
\begin{document}

\numberwithin{equation}{section}
\newtheorem{theorem}{Theorem}[section]
\newtheorem{proposition}[theorem]{Proposition}
\newtheorem{lemma}[theorem]{Lemma}
\newtheorem{corollary}[theorem]{Corollary}
\newtheorem{remark}{Remark}[section]
\newtheorem{definition}{Definition}[section]

\maketitle

\begin{abstract}
The Thompson group $V$ is a subgroup of the homeomorphism
group of the Cantor set $C$. Brin \cite{Brin} defined higher dimensional Thompson groups $nV$ as generalizations of $V$.
For each $n$, $nV$ is a subgroup of the homeomorphism group of $C^n$.
We prove that $nV$ has property $FA$, and especially the number of ends of $nV$ is equal to $1$. 
This is a generalization of the corresponding result of Farley \cite{Farley}, who studied the Thompson group $V=1V$.
\end{abstract}

\noindent \textbf{\textit{2010 Mathematics Subject Classification. }}Primary: 20F69, Secondary: 37E05.

\noindent \textbf{\textit{Keywords and phrases. }}The Thompson group, Higher dimensional Thompson groups, ends of group pairs, property FA.

\section{Intoduction}
Higher dimensional Thompson groups $nV$ were introduced by Brin in \cite{Brin} as generalizations of the Thompson group $V$.
The Thompson group $V$ is an infinite simple finitely presented group, which is described as a subgroup of the homeomorphism group of the Cantor set $C$.
Basic facts about $V$ are found in a paper by Cannon, Floyd and Parry \cite{CFP}.

Brin first studied the case of $n=2$ in detail, and showed that $V$ and $2V$ are not isomorphic (\cite{Brin}),
$2V$ is simple (\cite{Brin}) and $2V$ is finitely presented (\cite{Brin2005}).
These properties also hold true for general $nV$.
The simplicity of $nV$ was shown by Brin later in \cite{Brin2010}.
Bleak and Lanoue showed $n_1V$ and $n_2V$ are isomorphic if and only if $n_1=n_2$ in \cite{BL}.
Hennig and Matucci gave a finite presentation for each $nV$ (\cite{HM}).

In this paper we prove that for each $n$, $nV$ has property FA.
This result is the generalization of the corresponding result of Farley~\cite{Farley}, who studied $V$.

I thank my adviser, Takuya Sakasai for his support.
Koji Fujiwara, Tomohiko Ishida,
Tomohiro Fukaya and Masato Mimura gave insightful comments on the result.
Matthew G.\ Brin and Daniel S.\ Farley gave helpful comments and informed me related problems.
This work was supported by the Program for Leading Graduate Schools, MEXT, Japan.

%----------------------------------------------------------------------------------------------------

\section{Higher dimensional Thompson groups $nV$}\label{nV}

In this section, we give the definition of higher dimensional Thompson groups according to Brin's paper \cite{Brin}.
The symbol $I$ denotes $[0, 1)$ throughout this paper.

An {\it $n$-dimensional rectangle} is defined inductively as follows.
First, $I^n$ is a rectangle.

If $R=[a_1, b_1)\times\cdots\times[a_i, b_i)\times\cdots\times[a_n, b_n)$ is a rectangle,
then for all $i\in\{1, \ldots, n \}$, the ``$i$-th left half'' and ``the $i$-th right half'' defined by
\begin{align}
R_{l,i}&=[a_1, b_1)\times\cdots\times[a_i, {(a_i+b_i)}/2)\times\cdots\times[a_n, b_n) \label{R_l}\\
R_{r,i}&=[a_1, b_1)\times\cdots\times[{(a_i+b_i)}/2, b_i)\times\cdots\times[a_n, b_n) \label{R_r}
\end{align}
are again rectangles.

Throughout this paper, $I_l$ denotes $[0, 1/2)\times I^{n-1}$.
Similarly, $I_r$ denotes $[1/2, 1)\times I^{n-1}$.

Let $R=[a_1, b_1)\times\cdots\times[a_i, b_i)\times\cdots\times[a_n, b_n)$ be a rectangle. 
A {\it corner} of $R$ is a point in $\cl(R)$, whose $i$-th coordinate is either $a_i$ or $b_i$.
Here $\cl(R)$ denotes the closure of $R$ in $\mathbb{R}^n$.
An {\it $n$-dimensional pattern} is a finite set of $n$-dimensional rectangles, with pairwise disjoint, non-empty interiors and whose union is $I^n$.
A {\it numbered pattern} is a pattern with a one-to-one
correspondence to $\{0, 1, \ldots , r-1\}$ where $r$ is the number of rectangles in the pattern.

\begin{center}
%WinTpicVersion4.30a
{\unitlength 0.1in%
\begin{picture}( 24.0800, 10.0800)(  2.0000,-12.0800)%
% BOX 2 0 3 0 Black White
% 2 200 200 1208 1208
% 
\special{pn 8}%
\special{pa 200 200}%
\special{pa 1208 200}%
\special{pa 1208 1208}%
\special{pa 200 1208}%
\special{pa 200 200}%
\special{pa 1208 200}%
\special{fp}%
% LINE 2 0 3 0 Black White
% 2 200 704 1208 704
% 
\special{pn 8}%
\special{pa 200 704}%
\special{pa 1208 704}%
\special{fp}%
% LINE 2 0 3 0 Black White
% 2 710 704 710 200
% 
\special{pn 8}%
\special{pa 710 704}%
\special{pa 710 200}%
\special{fp}%
% STR 2 0 3 0 Black White
% 4 710 900 710 956 5 0 0 0
% $0$
\put(7.1000,-9.5600){\makebox(0,0){$0$}}%
% STR 2 0 3 0 Black White
% 4 956 424 956 480 2 0 0 0
% 
\put(9.5600,-4.8000){\makebox(0,0)[lb]{}}%
% LINE 2 0 3 0 Black White
% 2 710 452 200 452
% 
\special{pn 8}%
\special{pa 710 452}%
\special{pa 200 452}%
\special{fp}%
% LINE 2 0 3 0 Black White
% 2 452 200 452 452
% 
\special{pn 8}%
\special{pa 452 200}%
\special{pa 452 452}%
\special{fp}%
% STR 2 0 3 0 Black White
% 4 323 278 323 334 5 0 0 0
% $2$
\put(3.2300,-3.3400){\makebox(0,0){$2$}}%
% STR 2 0 3 0 Black White
% 4 581 278 581 334 5 0 0 0
% $3$
\put(5.8100,-3.3400){\makebox(0,0){$3$}}%
% STR 2 0 3 0 Black White
% 4 452 530 452 586 5 0 0 0
% $1$
\put(4.5200,-5.8600){\makebox(0,0){$1$}}%
% STR 2 0 3 0 Black White
% 4 956 396 956 452 5 0 0 0
% $4$
\put(9.5600,-4.5200){\makebox(0,0){$4$}}%
% STR 2 0 3 0 Black White
% 4 1432 648 1432 704 5 0 0 0
% $\not=$
\put(14.3200,-7.0400){\makebox(0,0){$\not=$}}%
% BOX 2 0 3 0 Black White
% 2 1600 200 2608 1208
% 
\special{pn 8}%
\special{pa 1600 200}%
\special{pa 2608 200}%
\special{pa 2608 1208}%
\special{pa 1600 1208}%
\special{pa 1600 200}%
\special{pa 2608 200}%
\special{fp}%
% LINE 2 0 3 0 Black White
% 2 1600 704 2608 704
% 
\special{pn 8}%
\special{pa 1600 704}%
\special{pa 2608 704}%
\special{fp}%
% LINE 2 0 3 0 Black White
% 2 2110 704 2110 200
% 
\special{pn 8}%
\special{pa 2110 704}%
\special{pa 2110 200}%
\special{fp}%
% STR 2 0 3 0 Black White
% 4 2110 900 2110 956 5 0 0 0
% $3$
\put(21.1000,-9.5600){\makebox(0,0){$3$}}%
% LINE 2 0 3 0 Black White
% 2 2110 452 1600 452
% 
\special{pn 8}%
\special{pa 2110 452}%
\special{pa 1600 452}%
\special{fp}%
% LINE 2 0 3 0 Black White
% 2 1852 200 1852 452
% 
\special{pn 8}%
\special{pa 1852 200}%
\special{pa 1852 452}%
\special{fp}%
% STR 2 0 3 0 Black White
% 4 1723 278 1723 334 5 0 0 0
% $4$
\put(17.2300,-3.3400){\makebox(0,0){$4$}}%
% STR 2 0 3 0 Black White
% 4 1981 278 1981 334 5 0 0 0
% $2$
\put(19.8100,-3.3400){\makebox(0,0){$2$}}%
% STR 2 0 3 0 Black White
% 4 1852 530 1852 586 5 0 0 0
% $1$
\put(18.5200,-5.8600){\makebox(0,0){$1$}}%
% STR 2 0 3 0 Black White
% 4 2356 396 2356 452 5 0 0 0
% $0$
\put(23.5600,-4.5200){\makebox(0,0){$0$}}%
\end{picture}}%
\end{center}

From now on, we will identify $n$-dimensional rectangle with a subset of $C^n$ and use the common symbol.
First we identify $I^n$ and $C^n$. $I$ denotes both $[0, 1)$ and $C$.
Let $R$ be a rectangle which is identified with a subset of $C^n$,
\begin{align}
R'=C^n\cap [{a'}_1, {b'}_1]\times\cdots\times[{a'}_i, {b'}_i]\times\cdots\times[{a'}_n, {b'}_n].
\end{align} 
Define rectangles $R_{l,i}$ and $R_{r,i}$ in the same way as we obtained (\ref{R_l}) and (\ref{R_r}).
These rectangles are identified respectively with the ``$i$-th left third'' and the ``$i$-th right third'' of $R'$, which is defined by
\begin{align}
C^n&\cap [{a'}_1, {b'}_1]\times\cdots\times[{a'}_i, {(2{a'}+{b'}_i)}/3]\times\cdots\times[{a'}_n, {b'}_n],\\
C^n&\cap [{a'}_1, {b'}_1]\times\cdots\times[{a'}_i, {({a'}+2{b'}_i)}/3]\times\cdots\times[{a'}_n, {b'}_n].
\end{align}
We proceed by induction.
In the same manner, every pattern describes a division of $C^n$.

\begin{center}
%WinTpicVersion4.30a
{\unitlength 0.1in%
\begin{picture}( 24.6400, 10.1700)(  2.0000,-12.0800)%
% BOX 2 0 3 0 Black White
% 2 200 200 1208 1208
% 
\special{pn 8}%
\special{pa 200 200}%
\special{pa 1208 200}%
\special{pa 1208 1208}%
\special{pa 200 1208}%
\special{pa 200 200}%
\special{pa 1208 200}%
\special{fp}%
% LINE 2 0 3 0 Black White
% 2 200 704 1208 704
% 
\special{pn 8}%
\special{pa 200 704}%
\special{pa 1208 704}%
\special{fp}%
% LINE 2 0 3 0 Black White
% 2 710 704 710 200
% 
\special{pn 8}%
\special{pa 710 704}%
\special{pa 710 200}%
\special{fp}%
% STR 2 0 3 0 Black White
% 4 710 900 710 956 5 0 0 0
% $0$
\put(7.1000,-9.5600){\makebox(0,0){$0$}}%
% STR 2 0 3 0 Black White
% 4 956 424 956 480 2 0 0 0
% 
\put(9.5600,-4.8000){\makebox(0,0)[lb]{}}%
% LINE 2 0 3 0 Black White
% 2 710 452 200 452
% 
\special{pn 8}%
\special{pa 710 452}%
\special{pa 200 452}%
\special{fp}%
% LINE 2 0 3 0 Black White
% 2 452 200 452 452
% 
\special{pn 8}%
\special{pa 452 200}%
\special{pa 452 452}%
\special{fp}%
% STR 2 0 3 0 Black White
% 4 323 278 323 334 5 0 0 0
% $2$
\put(3.2300,-3.3400){\makebox(0,0){$2$}}%
% STR 2 0 3 0 Black White
% 4 581 278 581 334 5 0 0 0
% $3$
\put(5.8100,-3.3400){\makebox(0,0){$3$}}%
% STR 2 0 3 0 Black White
% 4 452 530 452 586 5 0 0 0
% $1$
\put(4.5200,-5.8600){\makebox(0,0){$1$}}%
% STR 2 0 3 0 Black White
% 4 956 396 956 452 5 0 0 0
% $4$
\put(9.5600,-4.5200){\makebox(0,0){$4$}}%
% STR 2 0 3 0 Black White
% 4 1432 648 1432 704 5 0 0 0
% $\leftrightarrow$
\put(14.3200,-7.0400){\makebox(0,0){$\leftrightarrow$}}%
% LINE 2 0 3 0 Black White
% 8 1656 1208 2664 1208 2664 1208 2664 872 2664 872 1656 872 1656 872 1656 1208
% 
\special{pn 8}%
\special{pa 1656 1208}%
\special{pa 2664 1208}%
\special{fp}%
\special{pa 2664 1208}%
\special{pa 2664 872}%
\special{fp}%
\special{pa 2664 872}%
\special{pa 1656 872}%
\special{fp}%
\special{pa 1656 872}%
\special{pa 1656 1208}%
\special{fp}%
% LINE 2 0 3 0 Black White
% 8 2328 536 2664 536 2664 536 2664 200 2664 200 2328 200 2328 200 2328 536
% 
\special{pn 8}%
\special{pa 2328 536}%
\special{pa 2664 536}%
\special{fp}%
\special{pa 2664 536}%
\special{pa 2664 200}%
\special{fp}%
\special{pa 2664 200}%
\special{pa 2328 200}%
\special{fp}%
\special{pa 2328 200}%
\special{pa 2328 536}%
\special{fp}%
% LINE 2 0 3 0 Black White
% 8 1992 536 1992 424 1992 424 1656 424 1656 424 1656 536 1656 536 1992 536
% 
\special{pn 8}%
\special{pa 1992 536}%
\special{pa 1992 424}%
\special{fp}%
\special{pa 1992 424}%
\special{pa 1656 424}%
\special{fp}%
\special{pa 1656 424}%
\special{pa 1656 536}%
\special{fp}%
\special{pa 1656 536}%
\special{pa 1992 536}%
\special{fp}%
% LINE 2 0 3 0 Black White
% 8 1656 312 1656 200 1656 200 1768 200 1768 200 1768 312 1768 312 1656 312
% 
\special{pn 8}%
\special{pa 1656 312}%
\special{pa 1656 200}%
\special{fp}%
\special{pa 1656 200}%
\special{pa 1768 200}%
\special{fp}%
\special{pa 1768 200}%
\special{pa 1768 312}%
\special{fp}%
\special{pa 1768 312}%
\special{pa 1656 312}%
\special{fp}%
% LINE 2 0 3 0 Black White
% 10 1880 200 1880 312 1880 312 1992 312 1992 312 1992 200 1992 200 1880 200 1880 200 1880 200
% 
\special{pn 8}%
\special{pa 1880 200}%
\special{pa 1880 312}%
\special{fp}%
\special{pa 1880 312}%
\special{pa 1992 312}%
\special{fp}%
\special{pa 1992 312}%
\special{pa 1992 200}%
\special{fp}%
\special{pa 1992 200}%
\special{pa 1880 200}%
\special{fp}%
\special{pa 1880 200}%
\special{pa 1880 200}%
\special{fp}%
% STR 2 0 3 0 Black White
% 4 1712 200 1712 256 5 0 0 0
% $2$
\put(17.1200,-2.5600){\makebox(0,0){$2$}}%
% STR 2 0 3 0 Black White
% 4 1936 200 1936 256 5 0 0 0
% $3$
\put(19.3600,-2.5600){\makebox(0,0){$3$}}%
% STR 2 0 3 0 Black White
% 4 1824 424 1824 480 5 0 0 0
% $1$
\put(18.2400,-4.8000){\makebox(0,0){$1$}}%
% STR 2 0 3 0 Black White
% 4 2496 312 2496 368 5 0 0 0
% $4$
\put(24.9600,-3.6800){\makebox(0,0){$4$}}%
% STR 2 0 3 0 Black White
% 4 2160 984 2160 1040 5 0 0 0
% $0$
\put(21.6000,-10.4000){\makebox(0,0){$0$}}%
\end{picture}}%
\end{center}

We will construct a self-homeomorphism of $C^n$ from a pair of numbered patterns with the same number of rectangles.
Let $P=\{P_i\}_{0\leq i\leq r-1}$ and $Q=\{Q_i\}_{0\leq i\leq r-1}$ be numbered patterns.
We define $g(P,Q):I^n\to I^n$ which takes each $P_i$ onto $Q_i$ affinely so as to preserve the orientation.
Namely, the restriction of $g(P,Q)$ to each $P_i$ has the form $(x_1, \ldots, x_n)\mapsto (a_1 + {3}^{j_1}x_1, \ldots, a_n + {3}^{j_n}x_n)$
for some integers $j_1, \ldots, j_n$.
\begin{center}
%WinTpicVersion4.30a
{\unitlength 0.1in%
\begin{picture}( 14.0000,  8.4200)(  4.0000,-10.0000)%
% LINE 2 0 3 0 Black White
% 4 400 400 400 800 400 800 1000 800
% 
\special{pn 8}%
\special{pa 400 400}%
\special{pa 400 800}%
\special{fp}%
\special{pa 400 800}%
\special{pa 1000 800}%
\special{fp}%
% LINE 2 2 3 0 Black White
% 4 1000 800 1000 400 1000 400 400 400
% 
\special{pn 8}%
\special{pa 1000 800}%
\special{pa 1000 400}%
\special{dt 0.045}%
\special{pa 1000 400}%
\special{pa 400 400}%
\special{dt 0.045}%
% LINE 2 2 3 0 Black White
% 4 1400 200 1800 200 1800 200 1800 1000
% 
\special{pn 8}%
\special{pa 1400 200}%
\special{pa 1800 200}%
\special{dt 0.045}%
\special{pa 1800 200}%
\special{pa 1800 1000}%
\special{dt 0.045}%
% LINE 2 0 3 0 Black White
% 4 1800 1000 1400 1000 1400 1000 1400 200
% 
\special{pn 8}%
\special{pa 1800 1000}%
\special{pa 1400 1000}%
\special{fp}%
\special{pa 1400 1000}%
\special{pa 1400 200}%
\special{fp}%
% VECTOR 2 0 3 0 Black White
% 2 400 400 1400 200
% 
\special{pn 8}%
\special{pa 400 400}%
\special{pa 1400 200}%
\special{fp}%
\special{sh 1}%
\special{pa 1400 200}%
\special{pa 1331 193}%
\special{pa 1348 210}%
\special{pa 1339 233}%
\special{pa 1400 200}%
\special{fp}%
% VECTOR 2 0 3 0 Black White
% 6 1000 400 1800 200 1000 800 1800 1000 400 800 1400 1000
% 
\special{pn 8}%
\special{pa 1000 400}%
\special{pa 1800 200}%
\special{fp}%
\special{sh 1}%
\special{pa 1800 200}%
\special{pa 1730 197}%
\special{pa 1748 213}%
\special{pa 1740 236}%
\special{pa 1800 200}%
\special{fp}%
\special{pa 1000 800}%
\special{pa 1800 1000}%
\special{fp}%
\special{sh 1}%
\special{pa 1800 1000}%
\special{pa 1740 964}%
\special{pa 1748 987}%
\special{pa 1730 1003}%
\special{pa 1800 1000}%
\special{fp}%
\special{pa 400 800}%
\special{pa 1400 1000}%
\special{fp}%
\special{sh 1}%
\special{pa 1400 1000}%
\special{pa 1339 967}%
\special{pa 1348 990}%
\special{pa 1331 1007}%
\special{pa 1400 1000}%
\special{fp}%
% STR 2 0 3 0 Black White
% 4 700 500 700 600 5 0 0 0
% $i$
\put(7.0000,-6.0000){\makebox(0,0){$i$}}%
% STR 2 0 3 0 Black White
% 4 1600 500 1600 600 5 0 0 0
% $i$
\put(16.0000,-6.0000){\makebox(0,0){$i$}}%
\end{picture}}%
\end{center}

With the former identification of rectangles with subsets of $C^n$, above construction defines a self-homeomorphism of $C^n$.
We again write $g(P,Q)$ for this homeomorphism.  

When $n=2$, we illustrate $g(P,Q)$ as follows.
First we draw $P$ and $Q$ as divisions of $I^2$. 
Next we add an arrow from $P$ to $Q$, which indicates the domain and the range.

\begin{center}
%WinTpicVersion4.30a
{\unitlength 0.1in%
\begin{picture}( 24.0000, 10.0800)(  2.0000,-12.0800)%
% BOX 2 0 3 0 Black White
% 2 200 200 1208 1208
% 
\special{pn 8}%
\special{pa 200 200}%
\special{pa 1208 200}%
\special{pa 1208 1208}%
\special{pa 200 1208}%
\special{pa 200 200}%
\special{pa 1208 200}%
\special{fp}%
% LINE 2 0 3 0 Black White
% 2 200 704 1208 704
% 
\special{pn 8}%
\special{pa 200 704}%
\special{pa 1208 704}%
\special{fp}%
% LINE 2 0 3 0 Black White
% 2 710 704 710 200
% 
\special{pn 8}%
\special{pa 710 704}%
\special{pa 710 200}%
\special{fp}%
% STR 2 0 3 0 Black White
% 4 710 900 710 956 5 0 0 0
% $0$
\put(7.1000,-9.5600){\makebox(0,0){$0$}}%
% STR 2 0 3 0 Black White
% 4 956 424 956 480 2 0 0 0
% 
\put(9.5600,-4.8000){\makebox(0,0)[lb]{}}%
% LINE 2 0 3 0 Black White
% 2 710 452 200 452
% 
\special{pn 8}%
\special{pa 710 452}%
\special{pa 200 452}%
\special{fp}%
% LINE 2 0 3 0 Black White
% 2 452 200 452 452
% 
\special{pn 8}%
\special{pa 452 200}%
\special{pa 452 452}%
\special{fp}%
% STR 2 0 3 0 Black White
% 4 323 278 323 334 5 0 0 0
% $2$
\put(3.2300,-3.3400){\makebox(0,0){$2$}}%
% STR 2 0 3 0 Black White
% 4 581 278 581 334 5 0 0 0
% $3$
\put(5.8100,-3.3400){\makebox(0,0){$3$}}%
% STR 2 0 3 0 Black White
% 4 452 530 452 586 5 0 0 0
% $1$
\put(4.5200,-5.8600){\makebox(0,0){$1$}}%
% STR 2 0 3 0 Black White
% 4 956 396 956 452 5 0 0 0
% $4$
\put(9.5600,-4.5200){\makebox(0,0){$4$}}%
% STR 2 0 3 0 Black White
% 4 1432 648 1432 704 5 0 0 0
% $\rightarrow$
\put(14.3200,-7.0400){\makebox(0,0){$\rightarrow$}}%
% BOX 2 0 3 0 Black White
% 2 1600 200 2600 1200
% 
\special{pn 8}%
\special{pa 1600 200}%
\special{pa 2600 200}%
\special{pa 2600 1200}%
\special{pa 1600 1200}%
\special{pa 1600 200}%
\special{pa 2600 200}%
\special{fp}%
% LINE 2 0 3 0 Black White
% 2 2100 200 2100 1200
% 
\special{pn 8}%
\special{pa 2100 200}%
\special{pa 2100 1200}%
\special{fp}%
% LINE 2 0 3 0 Black White
% 4 2100 700 2600 700 2100 700 2600 700
% 
\special{pn 8}%
\special{pa 2100 700}%
\special{pa 2600 700}%
\special{fp}%
\special{pa 2100 700}%
\special{pa 2600 700}%
\special{fp}%
% LINE 2 0 3 0 Black White
% 2 1850 1200 1850 200
% 
\special{pn 8}%
\special{pa 1850 1200}%
\special{pa 1850 200}%
\special{fp}%
% LINE 2 0 3 0 Black White
% 2 1600 700 1850 700
% 
\special{pn 8}%
\special{pa 1600 700}%
\special{pa 1850 700}%
\special{fp}%
% STR 2 0 3 0 Black White
% 4 1720 350 1720 450 5 0 0 0
% $1$
\put(17.2000,-4.5000){\makebox(0,0){$1$}}%
% STR 2 0 3 0 Black White
% 4 1720 840 1720 940 5 0 0 0
% $0$
\put(17.2000,-9.4000){\makebox(0,0){$0$}}%
% STR 2 0 3 0 Black White
% 4 1970 600 1970 700 5 0 0 0
% $2$
\put(19.7000,-7.0000){\makebox(0,0){$2$}}%
% STR 2 0 3 0 Black White
% 4 2350 350 2350 450 5 0 0 0
% $4$
\put(23.5000,-4.5000){\makebox(0,0){$4$}}%
% STR 2 0 3 0 Black White
% 4 2350 840 2350 940 5 0 0 0
% $3$
\put(23.5000,-9.4000){\makebox(0,0){$3$}}%
\end{picture}}%
\end{center}

The {\it $n$-dimensional Thompson group} $nV$ is the set of self-homeomorphisms of $C^n$ of the form $g(P, Q)$.
Every element of $nV$ is identified with a partially affine, partially orientation preserving bijection from $I^n$ to itself.

Next is an important property which will be used in later discussion.
\begin{theorem}[{Brin \cite{Brin2010}}]
For all $n\in \mathbb{N}$, $nV$ is simple.
\end{theorem}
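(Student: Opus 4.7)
My plan is to follow the general Higman--Epstein strategy used for Thompson-type groups: reduce simplicity to two statements, namely that $nV$ is perfect and that every nontrivial normal subgroup $N \trianglelefteq nV$ coincides with $nV$. The geometric engine is a strong transitivity property of the action on dyadic rectangles, which substitutes for the one-dimensional ``interval calculus'' used in the proof that $V$ is simple.

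The first step I would establish is a transitivity lemma: given any two ordered tuples $(R_1,\ldots,R_k)$ and $(R_1',\ldots,R_k')$ of pairwise disjoint dyadic rectangles in $C^n$, there exists $f\in nV$ with $f(R_i)=R_i'$ for all $i$. This is proved by extending both tuples to numbered patterns of $C^n$ with the same number of pieces (possible because any dyadic rectangle can be subdivided further by the moves \eqref{R_l}, \eqref{R_r}) and then realizing the matching as a $g(P,Q)$. As a consequence, the subgroup $nV_R$ of elements supported in a given dyadic rectangle $R$ is conjugate in $nV$ to $nV_{R'}$ for any other dyadic $R'$, and $nV_R$ itself is naturally isomorphic to $nV$.

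For the normal-subgroup step, pick $g \in N\setminus\{1\}$. Since $g$ moves some point, continuity and the fact that dyadic rectangles form a basis of the topology of $C^n$ let me choose a dyadic $R$ so small and so positioned that $g(R)\cap R = \emptyset$. For any $h \in nV_R$, the commutator $[g,h]=(ghg^{-1})h^{-1}$ lies in $N$, and its two factors have disjoint supports $g(R)$ and $R$; in particular $[g,h]|_R = h^{-1}$. Letting $h$ range over $nV_R$ and using the transitivity lemma to conjugate, I conclude $N$ contains $nV_{R'}$ for every dyadic rectangle $R'$. A fragmentation argument—any $f\in nV$ restricted to the pieces of its defining pattern is supported in dyadic rectangles—then gives $N = nV$.

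For perfectness I would use the same disjoint-support trick in reverse: given $f\in nV$, choose a dyadic rectangle $R$ containing the ``non-trivial'' part of $f$, pick $g\in nV$ with $g(R)\cap R = \emptyset$, and express $f$ as a product of commutators of elements supported on $R\cup g(R)$ using the standard Thompson-type identity. The main obstacle I expect is the fragmentation step in higher dimension: unlike in $V$, patterns in $nV$ are not obtained by recursively bisecting a single coordinate, so one must carefully refine a given pattern to a common dyadic subdivision and verify that the resulting pieces all fall into small rectangles on which the previous arguments apply. Once this combinatorial bookkeeping is in hand, perfectness combines with the normal-subgroup step to yield simplicity.
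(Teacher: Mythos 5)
First, note that the paper does not prove this statement at all: it is imported verbatim from Brin's 2010 paper, so there is no internal proof to compare against. Your proposal must therefore be judged on its own, and while its skeleton (perfectness plus the disjoint-support commutator trick plus transitivity on tuples of rectangles) is indeed the standard Higman--Epstein outline that Brin's argument also follows, two steps are genuinely incomplete.

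The first is local but real: from $[g,h]=(ghg^{-1})h^{-1}\in N$ with the two factors supported in the disjoint sets $g(R)$ and $R$, you cannot ``conclude $N$ contains $nV_{R'}$'' by letting $h$ range over $nV_R$, because $[g,h]$ is not $h^{-1}$ --- it carries the extra factor $ghg^{-1}$ supported in $g(R)$. You need a second commutator against another $h_2\in nV_R$ to kill that factor, and this only yields $[nV_R,nV_R]\subseteq N$; so you must invoke perfectness of $nV_R\cong nV$ \emph{before} this step, not after, and your write-up has the logical order reversed. The second gap is the decisive one. The claim that ``any $f\in nV$ restricted to the pieces of its defining pattern is supported in dyadic rectangles'' is false as stated: the restriction of $g(P,Q)$ to $P_i$ is a map onto $Q_i\neq P_i$, not a self-homeomorphism supported in $P_i$, so it gives no factorization of $f$ into elements of the subgroups $nV_R$. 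What your argument actually delivers is that $N$ contains the normal closure of $\bigcup_R nV_R$, and the missing step is to show this normal closure is all of $nV$, i.e.\ that $nV$ is generated by elements conjugate into some $nV_R$ with $R$ proper. For $V$ this is routine via transpositions, but for $n\geq 2$ the generators $X_{d,0}$ (the baker's maps) have full support and genuinely mix coordinates, and proving that they lie in the subgroup generated by transpositions is precisely the content of Brin's 2010 paper --- it is the reason simplicity of $nV$ for general $n$ required a separate publication rather than being ``combinatorial bookkeeping'' about common refinements. As it stands, your proposal correctly identifies where the difficulty lives but does not supply the idea that resolves it.
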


%The following fact does not appear in the rest of this paper,
%but it is important to justify studying $nV$ for general $n$
%aside from the well discussed case, $n=1$.

%\begin{theorem}[{Bleak and Lanoue \cite{BL}}]
%$n_1V$ and $n_2V$ are isomorphic if and only if $n_1=n_2$.
%\end{theorem}

%----------------------------------------------------------------------------------------------------

\section{Ends of groups}\label{ends}
Let $\Gamma$ be a path-connected locally finite CW complex.
For a compact subset $K$, 
$\|\Gamma-K\|$ denotes the number of unbounded connected components of $\Gamma-K$.
The {\it number of ends of $\Gamma$}, $e(\Gamma)$, 
is defined to be the supremum of $\|\Gamma-K\|$ taken over all the compact subsets.

When $\Gamma$ is a graph, 
we equip $\Gamma$ with graph metric.
$B(m)$ denotes a ball of radius $m$ in $\Gamma$, based at some fixed vertex.
For simplicity, we ignore the dependence of $B(m)$ on the base point in notation.

Throughout this section, $G$ denotes a finitely generated group and $S$ denotes a finite generating set of $G$.
The {\it Cayley graph} $\Gamma_{G,S}$ is a graph whose vertex set is $G$,
and there is an oriented edge from $g\in G$ to $h\in G$ if some $s\in S$ satisfies $g\cdot s=h$. 
$G$ acts freely on $\Gamma_{G,S}$ from the left.

%\subsection{The number of ends}\label{ends1}
The {\it number of ends of} $G$, $e(G)$, is the number of ends of $\Gamma_{G,S}$.

\begin{theorem}[{cf.\ Geoghegan \cite[Corollary 13.5.12]{Geoghegan}}]\label{CW} 
Let $\Gamma$ be a path-connected locally finite CW complex on which $G$ acts freely.
Further suppose that the quotient space $\Gamma/G$ is a finite CW-complex.
Then $e(\Gamma)=e(G)$.
\end{theorem}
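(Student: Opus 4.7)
The plan is to deduce the theorem from the \v{S}varc--Milnor (Milnor--Schwarz) lemma together with the quasi-isometry invariance of the number of ends for proper geodesic spaces. Observe first that the Cayley graph $\Gamma_{G,S}$ is itself a path-connected locally finite CW complex on which $G$ acts freely and cocompactly, so it is a particular case of the kind of $\Gamma$ appearing in the statement. Thus the real content of the theorem is that any two such $\Gamma$ have the same number of ends, and identifying that common value with $e(G)$ is immediate once the case $\Gamma = \Gamma_{G,S}$ is handled.

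First I would put a length metric on $\Gamma$ by giving each closed cell its standard Euclidean metric and taking the induced path metric. Since $\Gamma$ is locally finite and path-connected, the Hopf--Rinow-type argument for CW complexes shows it is a proper geodesic metric space. The cellular action of $G$ is by isometries; freeness plus local finiteness makes it properly discontinuous, and finiteness of $\Gamma/G$ makes it cocompact. By the \v{S}varc--Milnor lemma, picking any vertex $v_0\in\Gamma$, the orbit map $g\mapsto g\cdot v_0$ is a quasi-isometry from $(G,d_S)$ to $\Gamma$; it extends $G$-equivariantly to a quasi-isometry $\Gamma_{G,S}\to\Gamma$ by sending the edge from $g$ to $gs$ to a fixed geodesic from $gv_0$ to $gsv_0$.

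Next I would invoke the fact that the number of ends is a quasi-isometry invariant of proper geodesic metric spaces. Concretely, given the quasi-isometry $f\colon \Gamma_{G,S}\to \Gamma$, any compact $K\subset\Gamma$ lies in some closed metric ball, and the preimage of a bounded neighbourhood of this ball is bounded in $\Gamma_{G,S}$, so it is contained in some compact $K'$. One then checks that $f$ sends each unbounded component of $\Gamma_{G,S}-K'$ into a unique unbounded component of $\Gamma-K$, and that this assignment is compatible with enlarging $K$; a quasi-inverse does the same in the opposite direction. Taking the inverse limit over compact sets produces a bijection between the ends of $\Gamma_{G,S}$ and those of $\Gamma$, so $e(\Gamma)=e(\Gamma_{G,S})=e(G)$.

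The main technical obstacle is the invariance step: one must verify that, for $K$ large enough, two points joined by a path in $\Gamma - K$ are joined by a quasi-geodesic in $\Gamma_{G,S}-K'$ and vice versa, so that the correspondence of components is well defined. The required fellow-travelling follows from the standard fact that quasi-geodesics in proper geodesic spaces stay within a bounded neighbourhood of actual paths, and cocompactness of the $G$-action traps this neighbourhood in a controlled enlargement of $K$. Everything else --- the proper geodesic structure on $\Gamma$, properness of the action, and the \v{S}varc--Milnor lemma itself --- is standard under the hypotheses given.
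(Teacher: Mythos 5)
Your argument is correct in outline, but it takes a different route from the source the paper relies on: the paper itself gives no proof of this theorem, citing Geoghegan's Corollary 13.5.12, which is obtained there by homotopy-theoretic/filtration methods (filtered ends of CW complexes and the invariance of the number of ends under proper homotopy equivalence over the quotient), whereas you argue coarse-geometrically via \v{S}varc--Milnor and quasi-isometry invariance of ends. Both routes are legitimate; yours is shorter and matches how the statement is actually used in Sections 4--5 (only the equality of the supremum of $\|\Gamma-K\|$ over compacta is needed), while Geoghegan's gives finer information (an actual bijection of end spaces, functorial in proper maps). Two small points you should tighten. First, to make the $G$-action isometric you must choose the piecewise-Euclidean metric $G$-equivariantly (metrize the cells of the finite quotient $\Gamma/G$ and pull back); this is harmless but should be said. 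Second, local finiteness alone does not guarantee that the induced length metric on a CW complex is complete and geodesic --- cells can shrink or attaching maps can degenerate --- so the Hopf--Rinow step really uses that $\Gamma$ has only finitely many isometry types of cells, which here comes from cocompactness (this is Bridson's theorem on $M_\kappa$-complexes with finitely many shapes); similarly, "free $+$ locally finite $\Rightarrow$ properly discontinuous" is glib on its own, but is salvaged by the hypothesis that $\Gamma/G$ is a CW complex, which forces $\Gamma\to\Gamma/G$ to be a covering. With those caveats made explicit, your proof is a valid and standard alternative to the cited one, and the final identification $e(\Gamma)=e(\Gamma_{G,S})=e(G)$ is exactly as you describe, since the Cayley graph itself satisfies the hypotheses of the theorem.
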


\begin{proposition}\label{FH}
\begin{itemize}
\item[$(1)$] $e(G)$ does not depend on the choice of $S$.
\item[$(2)$] $($The Freudenthal-Hopf Theorem\/$)$ $e(G)$ is $0$, $1$, $2$ or $\infty$. 
\item[$(3)$] $e(G)=0$ if and only if $G$ is finite.
\item[$(4)$] $e(G)=2$ if and only if $G$ has an infinite cyclic subgroup of finite index. 
\end{itemize}
\end{proposition}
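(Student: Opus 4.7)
The plan is to handle the four parts separately, invoking Theorem~\ref{CW} wherever it applies and isolating the two classical hard ingredients (Freudenthal--Hopf for (2) and the virtual-cyclicity converse of (4)) as the real work.

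Part~(1) follows at once from Theorem~\ref{CW}. For any finite generating set $S$, the Cayley graph $\Gamma_{G,S}$ is a path-connected, locally finite graph on which $G$ acts freely, with quotient a single-vertex finite graph (a wedge of $|S|$ circles). The theorem therefore gives $e(\Gamma_{G,S})=e(G)$ independently of $S$, so the right-hand side $e(G)$ is well defined.

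Part~(3) is a short direct verification. If $G$ is finite then $\Gamma_{G,S}$ is a finite graph and $e(G)=0$ is immediate. Conversely, if $G$ is infinite then $\Gamma_{G,S}$ is infinite, connected and locally finite; for any ball $B(m)$ the complement is infinite but admits only finitely many components (local finiteness bounds the number meeting $\partial B(m)$), so at least one component is unbounded, giving $e(G)\geq 1$.

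For part~(2), the Freudenthal--Hopf theorem, I would run the standard combinatorial argument. Suppose for contradiction that $3\leq e(G)<\infty$ and pick a compact $K$ realizing $\|\Gamma_{G,S}-K\|=e(G)$; list the unbounded components as $U_1,\ldots,U_k$. Choose $g\in G$ whose corresponding vertex sits deep inside $U_1$, at graph distance from $K$ exceeding the diameter of $K$. Then $gK\subset U_1$, and the translates $gU_2,\ldots,gU_k$ are unbounded and disjoint from $K$, most of them lying strictly inside $U_1$. Replacing $K$ by $K\cup gK$, the resulting compact set separates $\Gamma_{G,S}$ into at least $(k-1)$ components coming from $U_2,\ldots,U_k$ plus at least two new unbounded components inside $U_1$ coming from $gU_i$ with $i\neq 1$, totaling at least $k+1$ unbounded components, contradicting the maximality of $k=e(G)$. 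The main obstacle is the bookkeeping that certifies the new pieces genuinely add to rather than merge with the old ones; this rests on local finiteness and on choosing the translation distance large enough.

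Part~(4) splits into an easy and a deep direction. If $G$ has an infinite cyclic subgroup $H$ of finite index, then $H$, and hence $G$, acts freely and cocompactly on $\mathbb{R}$, so Theorem~\ref{CW} yields $e(G)=e(\mathbb{R})=2$. The converse is the main obstacle of the proposition: assuming $e(G)=2$, the natural $G$-action on the two-element set of ends of $\Gamma_{G,S}$ gives a homomorphism $G\to\mathbb{Z}/2$ whose kernel $G_0$ has index at most $2$ and fixes both ends; one then constructs a $G_0$-invariant bi-infinite quasi-line in $\Gamma_{G,S}$ to exhibit an infinite cyclic subgroup of finite index in $G_0$, and hence in $G$. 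Since this classical argument is standard and peripheral to the aims of the paper, I would at this step defer to the reference (Geoghegan) rather than reprove it in detail.
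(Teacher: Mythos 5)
The paper does not actually prove Proposition~\ref{FH}: it is presented as a compilation of classical facts (it carries no proof environment, and the surrounding text simply uses the statements), so there is no in-paper argument to match your proposal against. What you have written is the standard textbook route, and most of it is sound at the level of a sketch. Part~(1) via Theorem~\ref{CW} applied to the Cayley graphs for two generating sets is fine; part~(3) is correct; and your outline of the Freudenthal--Hopf translation argument for~(2) is the right one, with the caveat you yourself flag: one should take $K$ to be a connected ball so that it lies in a single component of $\Gamma_{G,S}-gK$, whence at least $k-1\geq 2$ of the translates $gU_i$ are disjoint from $K$ and contained in $U_1$ (it need not be exactly the translates $gU_2,\ldots,gU_k$, as your indexing suggests). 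Deferring the hard direction of~(4) to Geoghegan is consistent with the paper's own practice of citing these facts.

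The one step that is wrong as written is the easy direction of~(4). From an infinite cyclic subgroup $H$ of finite index you assert that ``$H$, and hence $G$, acts freely and cocompactly on $\mathbb{R}$''; but $G$ need not act on $\mathbb{R}$ at all. For $G=\mathbb{Z}\times\mathbb{Z}/2$ any free cocompact action would have quotient a connected finite graph with fundamental group $G$, which does not exist for $\mathbb{R}$, and indeed any simplicial action of the order-two factor on $\mathbb{R}$ fixes a point. The correct use of Theorem~\ref{CW} is to let $H$ (not $G$) act freely on $\Gamma_{G,S}$: since $[G:H]<\infty$, the quotient $\Gamma_{G,S}/H$ is a finite graph, so $e(G)=e(\Gamma_{G,S})=e(H)=e(\mathbb{Z})=2$. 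This is a one-line repair, but the step as stated does fail.
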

 
The following result, Stallings' theorem, provides a group-theoretical characterization of the case where $e(G)\geq 2$.
\begin{theorem}[Stallings \cite{Stallings}, Bergman \cite{Bergman}]
$e(G)\geq 2$ if and only if $G$ has a structure of an amalgamated product or an HNN-extension on some finite subgroup.
\end{theorem}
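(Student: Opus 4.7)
The plan is to prove the two implications separately, with the easy direction handled by Bass--Serre theory and the hard direction by the theory of almost invariant sets. For the easy direction, suppose $G$ splits as an amalgamated product $A *_C B$ with $A \neq C \neq B$, or as an HNN-extension with associated finite subgroup $C$. Then $G$ acts on the associated Bass--Serre tree $T$ with finite edge stabilizers and no global fixed point. Build a tree of spaces $\Gamma$ by replacing each vertex of $T$ with a Cayley graph of the corresponding vertex group and gluing along Cayley graphs of $C$; the $G$-action on $\Gamma$ is free, and the quotient is a finite CW-complex. Removing the finitely many edges of $\Gamma$ lying over a chosen edge of $T$ disconnects $\Gamma$ into at least two unbounded pieces, so $e(\Gamma) \geq 2$. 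Theorem~\ref{CW} then gives $e(G) = e(\Gamma) \geq 2$.

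For the hard direction, the starting point is to translate the topological hypothesis $e(G) \geq 2$ into combinatorial data on $G$. Concretely, pick a compact set (say a ball $B(m)$ in $\Gamma_{G,S}$) whose complement has at least two unbounded components, and let $A \subseteq G$ be the vertex set of one such component. Then $A$ and $G \setminus A$ are both infinite, and for every $g \in G$ the symmetric difference $A \triangle Ag$ is contained in a bounded neighborhood of $B(m)$, hence finite; that is, $A$ is an \emph{almost invariant} subset of $G$.

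From the almost invariant set $A$, one must manufacture a $G$-action on a simplicial tree with finite edge stabilizers, after which Bass--Serre theory produces the amalgamated product or HNN-extension. The approach I would take is Dunwoody's: realize $G$ as the fundamental group of a finite $2$-complex $X$ (using the finite generation assumption and passing to an appropriate model), lift $A$ to a properly embedded ``track'' in the universal cover $\widetilde{X}$, then use the closure of the $G$-orbit of this track under Boolean operations together with Dunwoody's resolution process to produce a $G$-invariant family of disjoint tracks whose dual graph is a tree $T$. The edge stabilizers of $T$ correspond to the setwise stabilizers of individual tracks, and the fact that $A$ is almost invariant forces these stabilizers to be finite.

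The main obstacle is precisely this last step: extracting an honest $G$-tree from a single almost invariant set. A lone cut $A$ only separates $G$ into two pieces; to get a tree one needs a $G$-invariant nested family of cuts, and producing nesting from an arbitrary almost invariant set requires the subtle accessibility argument (via tracks or via Sageev's cubulation followed by collapsing to a tree). Controlling the edge stabilizers --- showing the finiteness hypothesis on $C$ actually emerges, rather than merely a bound on the complexity of the stabilizer --- is the technical heart of the proof and the reason Stallings' theorem is much deeper than its converse.
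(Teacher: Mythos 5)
The paper does not prove this theorem at all: it is quoted as a known result of Stallings and Bergman and used as a black box, so there is no in-paper argument to compare yours against. Judged on its own terms, your sketch of the forward (``easy'') direction is essentially complete and correct: the tree-of-spaces over the Bass--Serre tree, freeness of the action, finiteness of the quotient, and an appeal to Theorem~\ref{CW} is the standard argument, and the finitely many edges of $\Gamma$ lying over one edge of $T$ do separate $\Gamma$ into at least two unbounded pieces because each half-tree of $T$ carries infinitely many nonempty vertex spaces. Your translation of $e(G)\geq 2$ into the existence of an almost invariant set $A$ with $A$ and $G\setminus A$ infinite is also correct and matches the computation the paper does carry out in its proof of Theorem~\ref{almost invariant subset}.

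The genuine gap is that the converse direction is a roadmap, not a proof, and you say so yourself: everything is deferred to ``Dunwoody's resolution process'' producing a $G$-invariant family of disjoint tracks whose dual graph is a tree. The one idea that actually makes that work is missing from your account: one chooses a track representing the coboundary of $A$ of \emph{minimal complexity} (minimal number of intersections with the $1$-skeleton), and minimality is what forces distinct $G$-translates of the track to be disjoint, i.e.\ forces the family of cuts to be nested; without some such exchange/minimality argument, ``closure under Boolean operations'' does not yield nesting and no tree emerges. Two smaller points: finiteness of the edge stabilizers follows not directly from almost invariance but from the fact that $\delta A$ finite makes the track compact, and a subgroup stabilizing a compact set under a free, properly discontinuous action is finite; and in the easy direction you should require $C$ to be proper in both factors (as you state) \emph{and} note that this already guarantees $G$ is infinite, which is needed for the two complementary pieces to be unbounded rather than merely nonempty. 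None of this affects the paper, which only uses the statement, but as a standalone proof the hard direction is incomplete exactly where you flag it.
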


In the light of this theorem, we can characterize the case of $e(G)=1$ in terms of group actions on trees.
We say that $G$ has {\it property FA} if every simplicial action of $G$ on a simplicial tree without edge-inversions has a fixed point. 
Here, a fixed point means $x\in T$ such that $g(x)=x$ for every $g\in G$. 

\begin{theorem}[Serre \cite{Serre}]
If an infinite group has propery FA, then $e(G)=1$.
\end{theorem}

%----------------------------------------------------------------------------------------------------

\section{$nV$ has property FA}\label{e(nV)}

In this section, we prove that $nV$ has property FA, using a finite presentation of $nV$.
Throughout this section, $T$ denotes a simplicial tree. 
For $x, y\in T$, we write $[x:y]$ for the geodesic joining $x$ to $y$.
An action of a group on $T$ is assumed to be simplicial and to act without edge inversions.

Let $G$ be a group acting on $T$. 
Let $g \in G$.
If $\Fix(g)$ is non-empty, $g$ is said to be {\it elliptic}.
Otherwise, we say that $g$ is {\it hyperbolic}. 

The following proposition is a basic fact about group actions on trees.
\begin{proposition}[{Serre \cite{Serre}}]\label{Serre's lemma}
Let $G$ be a group acting on $T$. Let $g \in G$.
\begin{itemize}
\item[$(1)$] $\Fix(g)=\{\, x\in T\mid g(x)=x\,\}$ is either empty or a subtree of $T$.
\item[$(2)$] If $g$ is hyperbolic, $g$ acts on a unique simplicial line in $T$ by translation. This line is called the axis of $g$.
\item[$(3)$] $($Serre's lemma\/$)$ Assume that $G$ is generated by a finite set of elements $\{s_j\}_{1\leq i\leq m}$ such that 
every element and the multiplication of every two elements are elliptic. 
Then there is $x\in T$ which is fixed by every element of $G$.
\end{itemize}
\end{proposition}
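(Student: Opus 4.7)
The plan is to treat the three parts in order, exploiting throughout that a simplicial tree is uniquely geodesic and that $G$ acts without edge inversions.

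For part $(1)$, suppose $x,y\in\Fix(g)$. Since $g$ is a simplicial isometry, $g([x:y])$ is a geodesic from $g(x)=x$ to $g(y)=y$, and uniqueness of geodesics in a tree forces $g([x:y])=[x:y]$ setwise. I would then induct on the distance from $x$ along $[x:y]$: if the vertex $v$ at distance $k$ is fixed, then $g$ sends the unique edge of $[x:y]$ emanating from $v$ toward $y$ to an edge of $[x:y]$ emanating from $v$ toward $y$; since $g$ does not invert edges, this must be the same edge, so the next vertex is fixed as well. Hence $[x:y]\subseteq\Fix(g)$, which makes $\Fix(g)$ a subtree.

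For part $(2)$, I would introduce the translation length $\ell(g)=\min_{x\in T}d(x,g(x))$ and the displacement-minimizing set $A_g=\{x\in T\mid d(x,g(x))=\ell(g)\}$. The key geometric observation is that $A_g$ coincides with the set of $x$ for which the concatenation $[g^{-1}(x):x]\cup[x:g(x)]$ is itself a geodesic. Standard manipulations with the bridge between $x$ and its $g$-orbit show $A_g$ is non-empty, connected and, when $\ell(g)>0$, a bi-infinite simplicial line on which $g$ acts by translation by $\ell(g)$. Uniqueness follows because any $g$-invariant line must consist of points with minimal displacement and therefore lie in $A_g$.

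For part $(3)$, the heart of the matter is the following local lemma: if $g,h$ are elliptic and $gh$ is also elliptic, then $\Fix(g)\cap\Fix(h)\neq\emptyset$. I would prove this by contrapositive. Assuming the subtrees $\Fix(g)$ and $\Fix(h)$ are disjoint, there is a unique shortest geodesic bridge $[p:q]$ with $p\in\Fix(g)$ and $q\in\Fix(h)$. Tracing a point along this bridge under $gh$, and using $(1)$ together with the no-edge-inversion hypothesis, shows that $gh$ admits an axis through $[p:q]$ with translation length $2\,d(\Fix(g),\Fix(h))>0$, so $gh$ is hyperbolic. Applied to the generators $\{s_j\}$ and their pairwise products, this yields that the finite family $\{\Fix(s_j)\}_{j=1}^{m}$ consists of pairwise intersecting subtrees. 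I would then invoke the Helly property for subtrees of a tree (any finite collection of pairwise intersecting subtrees has a common point, proved by a straightforward induction on $m$ using uniqueness of geodesics) to obtain a point $x\in\bigcap_j\Fix(s_j)$, which is fixed by every generator and hence by all of $G$.

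The main obstacle I anticipate is the translation-length computation in part $(3)$: one has to verify cleanly that under the disjoint-fixed-set hypothesis the bridge $[p:q]$ is mapped by $gh$ to a geodesic that continues past $q$, producing a genuine translation, while carefully excluding any degenerate configuration arising from the tree being bipartite-like near the bridge. Once this geometric picture is in place, parts $(1)$ and $(2)$ feed into the Helly argument and the conclusion is immediate.
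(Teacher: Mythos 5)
The paper does not prove this proposition at all: it is quoted as a classical result with a citation to Serre's \emph{Trees}, so there is no in-paper argument to compare against. Your outline is the standard proof from that source (unique geodesics force fixed-point sets to be subtrees; the displacement-minimizing set gives the axis; the lemma that two elliptic elements with disjoint fixed trees have hyperbolic product with translation length twice the distance between the fixed trees, combined with the Helly property for finitely many pairwise-intersecting subtrees), and it is correct. One minor remark: in part $(1)$ the appeal to no-edge-inversions is not really where the work happens --- since $g$ fixes $x$ and preserves $[x:y]$ setwise while preserving distances to $x$, each vertex of the segment is already forced to be fixed; similarly, in part $(3)$ the non-degeneracy of the translation is exactly the observation that if $g$ fixed the first edge of the bridge at $p$ then its far endpoint would lie in $\Fix(g)$, contradicting minimality of the bridge, which is the clean way to discharge the obstacle you flag.
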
 

\begin{lemma}\label{commutativity}
Let $G$ be a group acting on $T$.
If $g$ and $h$ are elliptic and satisfy $gh=hg$, then $g$ and $h$ have a common fixed point.
\end{lemma}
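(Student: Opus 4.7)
My plan is to argue by contradiction, exploiting the subtree structure of fixed-point sets together with the uniqueness of projections onto subtrees in a tree.

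Write $F_g = \Fix(g)$ and $F_h = \Fix(h)$. By Proposition~\ref{Serre's lemma}(1), both $F_g$ and $F_h$ are nonempty subtrees of $T$. Suppose toward a contradiction that $F_g \cap F_h = \emptyset$. The first step is to observe that the commutation relation $gh = hg$ forces $F_g$ to be $h$-invariant and $F_h$ to be $g$-invariant as sets: for any $x \in F_g$, $g(h(x)) = h(g(x)) = h(x)$, so $h(x) \in F_g$, and symmetrically for $F_h$.

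Next I would invoke the standard tree-geometric fact that any point $y \in T$ has a unique closest point in a given subtree $S \subseteq T$ (the ``projection''), and hence between two disjoint subtrees there is a unique minimizing geodesic. Applied here, there exist unique $x_0 \in F_g$ and $y_0 \in F_h$ realizing $d(F_g, F_h)$, and $x_0$ is the unique point of $F_g$ closest to $y_0$.

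The finishing step is the punchline: $h$ is an isometry, fixes $y_0$, and maps $F_g$ onto itself. Therefore $h$ must send the unique closest point of $F_g$ to $y_0$ to the unique closest point of $h(F_g) = F_g$ to $h(y_0) = y_0$, which is the same point $x_0$. Thus $h(x_0) = x_0$, so $x_0 \in F_g \cap F_h$, contradicting our assumption.

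I do not expect any serious obstacle; the only nontrivial ingredient is the uniqueness of the projection of a point onto a subtree of a tree, which follows immediately from uniqueness of geodesics (two distinct closest points would give a nontrivial geodesic in $S$ whose midpoint would be strictly closer). If a self-contained argument is desired, one can alternatively choose a minimizing pair $(x_0,y_0)$ and argue that $h(x_0) \in F_g$ with $d(h(x_0), y_0) = d(x_0, y_0)$, so that the geodesic $[h(x_0):y_0]$ together with $[y_0:x_0]$ would form a nondegenerate loop unless $h(x_0) = x_0$, again yielding the contradiction.
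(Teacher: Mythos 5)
Your proof is correct, and it is close in spirit to the paper's but finishes with a different key step. The paper fixes $y\in\Fix(h)$, lets $x$ be the point of $\Fix(g)$ nearest to $y$, notes that $g^{-1}(y)\in\Fix(h)$ by commutativity, and that the geodesic $[y:g^{-1}(y)]$ (the concatenation of $[y:x]$ with its $g^{-1}$-image) passes through $x$; convexity of the subtree $\Fix(h)$ then forces $x\in\Fix(h)$. You instead use commutativity to make $\Fix(g)$ invariant under $h$, and then conclude $h(x_0)=x_0$ from the uniqueness of the nearest-point projection of the $h$-fixed point $y_0$ onto the $h$-invariant subtree $\Fix(g)$. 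The two arguments are essentially dual: the paper moves $y$ by $g^{-1}$ and exploits convexity of $\Fix(h)$, while you move $x_0$ by $h$ and exploit rigidity of the projection onto $\Fix(g)$; the auxiliary fact you need (uniqueness of projections in a tree) is of the same elementary nature as the fact the paper needs (that $[y:x]\cup g^{-1}([y:x])$ is geodesic), and you justify it correctly. One small simplification available to you: there is no need to take the pair $(x_0,y_0)$ realizing $d(\Fix(g),\Fix(h))$ --- any $y_0\in\Fix(h)$ together with its unique projection onto $\Fix(g)$ already runs the argument. Both proofs are complete; neither has a material advantage over the other.
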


\begin{proof}
Let $g$ and $h$ be elliptic elements which satisfy $gh=hg$.
Assume to the contrary that $g$ and $h$ do not have a common fixed point. 
Fix $y\in \Fix(h)$. 
Let $[y:x]$ be the shortest geodesic joining $y$ to $\Fix(g)$. 
The composition of $g^{-1}([y:x])$ and $[y:x]$ is $[y:g^{-1}(y)]$.
Now $g^{-1}(y)\in \Fix(h)$, because $h^{-1}g^{-1}(y)=g^{-1}h^{-1}(y)=g^{-1}(y)$.
By Lemma~\ref{Serre's lemma} $(1)$, $[y:g^{-1}(y)]\subset \Fix(h)$.
Therefore $x\in \Fix(h)$.
This contradicts our assumption.
\end{proof}

We define $X_{1,0}$, $X_{d',0}$, $C_{d', 0}$, $\pi_0$, $\overline{\pi}_0 \in nV$ $(2\leq d'\leq n)$ as shown in the following figure.
For $i\geq 1$, $X_{d,i}$ $(1\leq d\leq n)$ is defined inductively.
On $I_r$, $X_{d,i}$ restricts to the identity.
For $x\in I_l$, we write $x=(x_1, x_2)$ where $x_1\in [0,1/2)$ and $x_2\in I^{n-1}$.
We define $\phi:I_l\to I^n$ by $\phi(x_1, x_2)=(2x_1, x_2)$.  
On $I_l$, $X_{d,i}=X_{d, i-1}\phi$.
Similarly, $C_{d', i}$, $\pi_i$ and $\overline{\pi}_{i}$ restricts to the identity on $I_r$
and $C_{d',i-1}\phi$, $\pi_{i-1}\phi$ and $\overline{\pi}_{i-1}\phi$ respectively on $I_l$.

\begin{center}
\input{generators.tex}
\end{center}

\begin{theorem}[{Hennig and Matucci \cite[Theorem 23]{HM}}]\label{presentation}
Let
\begin{align}
\Sigma = \{X_{d,i}, C_{d', i}, \pi_i, \overline{\pi}_i\}_{1\leq d\leq n,\ 2\leq d'\leq n,\ i\geq 0}.
\end{align}
\begin{itemize}
\item[$(1)$] $\Sigma$ is a generating set of $nV$.\\
\item[$(2)$] The elements of $\Sigma$ satisfy the following relations.\\
\begin{align}
X_{d'',j} X_{d,i} &= X_{d,i} X_{d'', j+1}  &(i<j, 1\leq d, d''\leq n) \label{relation1}\\
C_{d',j}X_{d,i} &= X_{d,i} C_{d',j+1}  &(i<j, 1\leq d\leq n, 2\leq d'\leq n) \label{relation1_C}\\
Y_j X_{d,i} &= X_{d,i} Y_{j+1} &(i<j, Y\in\{\,\pi, \overline{\pi}\,\}, 1\leq d\leq n) \label{relation1_pi}\\
\pi_j X_{d,i} &= X_{d,i}\pi_j &(i>j+1, 1\leq d\leq n) \label{relation2}\\
\pi_j C_{d',i} &= C_{d',i}\pi_j &(i>j+1, 2\leq d'\leq n) \label{relation2_C}\\
\pi_j \pi_i &=\pi_i \pi_j &(|i-j|>2) \label{relation2_pi}\\
\overline{\pi}_j \pi_i &=\pi_i \overline{\pi}_j &(j>i+1) \label{relation2_pibar}\\
\overline{\pi}_i X_{1,i} &= \pi_i \overline{\pi}_{i+1} &(i\geq 0) \label{relation3}\\
C_{d',i} X_{1,i}&= X_{d',i}C_{d',i+2} \pi_{i+1} &(i\geq 0, 2\leq d'\leq n) \label{relation4}\\
\pi_i X_{d,i} &= X_{d,i+1} \pi_i \pi_{i+1} &(i\geq 0, 1\leq d\leq n) \label{relation5}
\end{align}
\end{itemize}
\end{theorem}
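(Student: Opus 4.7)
The plan is to split the statement into its two parts, which have very different characters. Part~(2), the verification of the relations, is essentially a direct computation: each relation asserts that two finite compositions of explicit piecewise-affine homeomorphisms of $C^n$ coincide. Since every generator is supported on a specific rectangle (typically the leftmost subdivision strip) and acts affinely with a well-defined scaling, one verifies each relation by choosing a common subdivision of $I^n$ that is refined enough for both sides to be affine on every piece, and then comparing the two affine maps rectangle by rectangle. The index shifts $i \mapsto j+1$ appearing in the commutation relations~(\ref{relation1})--(\ref{relation2_pibar}) are forced by the fact that applying $X_{d,i}$ ``uses up'' one level of the leftmost nesting, so any subsequent operation at a deeper level is pushed one index further down.

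For part~(1), that $\Sigma$ generates $nV$, I would argue by induction on the total number of rectangles in a numbered-pattern pair $(P,Q)$ representing a given $g \in nV$. When the patterns have only one rectangle each, $g$ is the identity. For the inductive step, the goal is to multiply $g$ on the left and right by elements of $\Sigma$ until both $P$ and $Q$ become a ``standard comb'' obtained by repeatedly halving the leftmost cube in the first coordinate direction, after which $g$ reduces to a permutation of the numbered leaves at a single depth.

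The work falls into three sub-steps. First, show that any pattern can be transformed into the standard comb by a sequence of operations $X_{d,i}^{\pm 1}$ and $C_{d',i}^{\pm 1}$; these play the role of the familiar Thompson-group ``carets'' but now carry an extra label telling us which coordinate direction is being subdivided. Relation~(\ref{relation4}), namely $C_{d',i} X_{1,i} = X_{d',i} C_{d',i+2} \pi_{i+1}$, is the key tool: it lets us convert a subdivision in direction~$d'$ into a subdivision in direction~$1$ at the cost of an $X_{d',i}$ and a $\pi_{i+1}$. Second, after both $P$ and $Q$ are in standard form, $g$ is determined by a permutation of the numbered leaves; one checks that $\pi_i$ together with the ``flip'' $\overline{\pi}_i$ generate the full symmetric group on these leaves (relation~(\ref{relation3}) is what ties $\overline{\pi}$ back into the system). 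Third, assemble the pieces.

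The main obstacle will be making the first sub-step rigorous. In dimension one (the classical $V$ of \cite{CFP}) there is only one kind of caret, so a straightforward induction on tree size works. Here each internal node of a pattern carries a direction label $d \in \{1,\dots,n\}$, and distinct labels really produce distinct patterns, so one must prove constructively that the $C_{d',i}$ generators, together with the $X_{1,i}$ and the permutations $\pi_i$, $\overline{\pi}_i$, are enough to replace any $d'$-directed caret by a sequence of $1$-directed carets and rearrangements. This is exactly the content encoded in relation~(\ref{relation4}), so the combinatorics of the reduction will be driven by a careful iterated application of that relation. Once this direction-swap move is in hand, the reduction to standard form proceeds essentially as in the one-dimensional case.
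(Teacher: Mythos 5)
This theorem is not proved in the paper at all: it is quoted verbatim from Hennig and Matucci \cite[Theorem 23]{HM} as an external input, so there is no in-paper argument to compare yours against. Judged on its own terms, your outline for part (2) is fine --- each relation is an identity between two explicit piecewise-affine maps and can be checked on a common refinement --- and your overall template for part (1) (factor $g=(P,Q)$ through a standard pattern, handle the leaf permutation separately) is the same normal-form strategy that Hennig--Matucci and Brin actually use.

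The genuine gap is in your first sub-step. You propose to transform an arbitrary pattern into a comb obtained by halving only in direction $1$, using relation~(\ref{relation4}) to ``convert a subdivision in direction $d'$ into a subdivision in direction~$1$.'' This is not possible and misreads the generators: for $n\geq 2$ a pattern that subdivides in direction $d'$ is genuinely different from any direction-$1$ comb, and no amount of multiplication by group elements changes the combinatorics of the domain pattern of $g$ into a comb --- what one actually does is factor $g=(C_r,Q)\circ(P,C_r)^{\vphantom{1}}$, where $C_r$ is the standard comb, and then express each factor $(C_r,P)$ in the generators. In that expression the $d'$-directed carets of $P$ are produced directly by the generators $X_{d',i}$; the elements $C_{d',i}$ are not direction-conversion tools but baker's maps identifying a direction-$1$ split of a rectangle with a direction-$d'$ split, and relation~(\ref{relation4}) records how such a baker's map interacts with adding a caret, rather than enabling the elimination of $d'$-carets. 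Your induction ``on the total number of rectangles'' also does not quite run as stated, since multiplying by elements of $\Sigma$ need not decrease that count; the workable induction peels off one caret of the domain tree at a time via a conjugated $X_{d,i}^{\pm1}$. Finally, the permutation step needs more care than ``$\pi_i$ and $\overline{\pi}_i$ generate the symmetric group on the leaves'': the comb leaves have unequal sizes, $\pi_i$ only transposes adjacent small leaves, and $\overline{\pi}_i$ is precisely what is needed to move the large rightmost leaf, which is why relation~(\ref{relation3}) appears. None of this is fatal to the strategy, but as written the central reduction would fail.
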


\begin{corollary}
Let
\begin{align}
 S = \{\,X_{d,1}, X_{d,1}{(X_{d,0})}^{-1}, C_{d',2}, \pi_0, \pi_3, \overline{\pi}_3\,\}_{1\leq d\leq n, 2\leq d'\leq n}. \label{generators}
\end{align}
This is a generating set of $nV$.
\end{corollary}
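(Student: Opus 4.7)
The plan is to invoke Theorem~\ref{presentation}(1), which exhibits $\Sigma$ as a generating set for $nV$, and then show that every element of $\Sigma$ lies in $\langle S\rangle$. I would exploit the relations (\ref{relation1})--(\ref{relation5}) in two complementary ways: conjugating a low-index generator by some $X_{d,0}$ to climb up to higher indices, and back-solving the more intricate relations (\ref{relation3}), (\ref{relation4}), (\ref{relation5}) to produce the few small-index generators that are not already in $S$.

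The first step would be to extract $X_{d,0}$ itself: since both $X_{d,1}$ and $X_{d,1}(X_{d,0})^{-1}$ are in $S$, the product $(X_{d,1}(X_{d,0})^{-1})^{-1} X_{d,1}$ equals $X_{d,0}$. Relation (\ref{relation1}) with $i=0$ and $j\geq 1$ then reads $X_{d'',j+1} = X_{d,0}^{-1} X_{d'',j} X_{d,0}$, so starting from $X_{d'',1}\in S$ and iterating gives every $X_{d'',j}$ with $j\geq 0$.

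With all $X_{d,i}$ in hand, I would next obtain all $\pi_i$ and $\overline{\pi}_i$. Solving relation (\ref{relation5}) at $i=0$ and then $i=1$ yields $\pi_1$ and $\pi_2$ from $\pi_0\in S$ and the known $X_{d,i}$; together with $\pi_3\in S$, the shift relation (\ref{relation1_pi}) with $i=0$, namely $\pi_{j+1} = X_{d,0}^{-1}\pi_j X_{d,0}$ for $j\geq 1$, propagates to every $\pi_j$. For the $\overline{\pi}$-family, rearranging (\ref{relation3}) at $i=2,1,0$ successively recovers $\overline{\pi}_2,\overline{\pi}_1,\overline{\pi}_0$ from $\overline{\pi}_3\in S$ and the data just produced, and (\ref{relation1_pi}) applied to $\overline{\pi}$ produces every higher $\overline{\pi}_j$ by the same conjugation trick.

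Finally, for the $C_{d',i}$, rearranging (\ref{relation4}) at $i=0$ and $i=1$ gives $C_{d',0}$ and $C_{d',1}$ in terms of $X$'s, $\pi$'s and $C_{d',2}\in S$, while (\ref{relation1_C}) with $i=0$ propagates $C_{d',2}$ upward to every $C_{d',j}$ with $j\geq 2$. I do not anticipate a conceptual obstacle, only one of careful bookkeeping: each back-solving use of (\ref{relation5}), (\ref{relation3}), (\ref{relation4}) requires checking the hypothesis $i\geq 0$, and each conjugation use of (\ref{relation1}), (\ref{relation1_C}), (\ref{relation1_pi}) requires the hypothesis $i<j$. The mildly delicate part is that back-solved elements feed into later back-solving steps (for instance $\pi_1$ is needed to express $C_{d',0}$), so the reductions must be performed in the order $X_{d,0}\to X_{d,i}\to \pi_i\to\overline{\pi}_i\to C_{d',i}$ above.
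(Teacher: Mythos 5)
Your proposal is correct and follows essentially the same route as the paper: extract $X_{d,0}$ as $(X_{d,1}(X_{d,0})^{-1})^{-1}X_{d,1}$, conjugate by powers of $X_{d,0}$ via the shift relations (\ref{relation1}), (\ref{relation1_C}), (\ref{relation1_pi}) to reach all indices $i\geq 1$, and back-solve (\ref{relation3}) and (\ref{relation4}) for the remaining index-$0$ generators. The only (immaterial) difference is that you recover $\pi_1,\pi_2$ by back-solving (\ref{relation5}) and $C_{d',1}$ from (\ref{relation4}), whereas the paper obtains all of these directly by downward conjugation from $\pi_3$ and $C_{d',2}$.
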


\begin{center}
%WinTpicVersion4.30a
{\unitlength 0.1in%
\begin{picture}( 33.5200, 18.7000)(  0.3000,-20.7000)%
% STR 2 0 3 0 Black White
% 4 768 599 768 679 5 0 0 0
% $X_{1,1}{(X_{1,0})}^{-1}=$
\put(7.6800,-6.7900){\makebox(0,0){$X_{1,1}{(X_{1,0})}^{-1}=$}}%
% BOX 2 0 3 0 Black White
% 2 1460 360 2100 999
% 
\special{pn 8}%
\special{pa 1460 360}%
\special{pa 2100 360}%
\special{pa 2100 999}%
\special{pa 1460 999}%
\special{pa 1460 360}%
\special{pa 2100 360}%
\special{fp}%
% LINE 2 0 3 0 Black White
% 10 1460 360 1620 200 1620 200 2260 200 2260 200 2100 360 2260 200 2260 839 2260 839 2100 999
% 
\special{pn 8}%
\special{pa 1460 360}%
\special{pa 1620 200}%
\special{fp}%
\special{pa 1620 200}%
\special{pa 2260 200}%
\special{fp}%
\special{pa 2260 200}%
\special{pa 2100 360}%
\special{fp}%
\special{pa 2260 200}%
\special{pa 2260 839}%
\special{fp}%
\special{pa 2260 839}%
\special{pa 2100 999}%
\special{fp}%
% LINE 2 0 3 0 Black White
% 16 1780 999 1780 999 1780 999 1780 360 1780 360 1940 200 2100 200 1940 360 1940 360 1940 999 1620 999 1620 360 1620 360 1780 200 2420 679 2420 679
% 
\special{pn 8}%
\special{pa 1780 999}%
\special{pa 1780 999}%
\special{fp}%
\special{pa 1780 999}%
\special{pa 1780 360}%
\special{fp}%
\special{pa 1780 360}%
\special{pa 1940 200}%
\special{fp}%
\special{pa 2100 200}%
\special{pa 1940 360}%
\special{fp}%
\special{pa 1940 360}%
\special{pa 1940 999}%
\special{fp}%
\special{pa 1620 999}%
\special{pa 1620 360}%
\special{fp}%
\special{pa 1620 360}%
\special{pa 1780 200}%
\special{fp}%
\special{pa 2420 679}%
\special{pa 2420 679}%
\special{fp}%
% STR 2 0 3 0 Black White
% 4 2420 599 2420 679 5 0 0 0
% $\rightarrow$
\put(24.2000,-6.7900){\makebox(0,0){$\rightarrow$}}%
% BOX 2 0 3 0 Black White
% 2 2581 360 3221 999
% 
\special{pn 8}%
\special{pa 2581 360}%
\special{pa 3221 360}%
\special{pa 3221 999}%
\special{pa 2581 999}%
\special{pa 2581 360}%
\special{pa 3221 360}%
\special{fp}%
% LINE 2 0 3 0 Black White
% 22 3221 999 3381 839 3381 839 3381 200 3381 200 3221 360 3381 200 2741 200 2741 200 2581 360 2901 360 2901 999 2741 999 2741 360 2741 360 2901 200 3061 200 2901 360 2821 360 2821 999 2821 360 2981 200
% 
\special{pn 8}%
\special{pa 3221 999}%
\special{pa 3381 839}%
\special{fp}%
\special{pa 3381 839}%
\special{pa 3381 200}%
\special{fp}%
\special{pa 3381 200}%
\special{pa 3221 360}%
\special{fp}%
\special{pa 3381 200}%
\special{pa 2741 200}%
\special{fp}%
\special{pa 2741 200}%
\special{pa 2581 360}%
\special{fp}%
\special{pa 2901 360}%
\special{pa 2901 999}%
\special{fp}%
\special{pa 2741 999}%
\special{pa 2741 360}%
\special{fp}%
\special{pa 2741 360}%
\special{pa 2901 200}%
\special{fp}%
\special{pa 3061 200}%
\special{pa 2901 360}%
\special{fp}%
\special{pa 2821 360}%
\special{pa 2821 999}%
\special{fp}%
\special{pa 2821 360}%
\special{pa 2981 200}%
\special{fp}%
% STR 2 0 3 0 Black White
% 4 1540 599 1540 679 5 0 0 0
% $0$
\put(15.4000,-6.7900){\makebox(0,0){$0$}}%
% STR 2 0 3 0 Black White
% 4 1700 599 1700 679 5 0 0 0
% $1$
\put(17.0000,-6.7900){\makebox(0,0){$1$}}%
% STR 2 0 3 0 Black White
% 4 1860 599 1860 679 5 0 0 0
% $2$
\put(18.6000,-6.7900){\makebox(0,0){$2$}}%
% STR 2 0 3 0 Black White
% 4 2020 599 2020 679 5 0 0 0
% $3$
\put(20.2000,-6.7900){\makebox(0,0){$3$}}%
% STR 2 0 3 0 Black White
% 4 2661 599 2661 679 5 0 0 0
% $0$
\put(26.6100,-6.7900){\makebox(0,0){$0$}}%
% STR 2 0 3 0 Black White
% 4 2781 599 2781 679 5 0 0 0
% $1$
\put(27.8100,-6.7900){\makebox(0,0){$1$}}%
% STR 2 0 3 0 Black White
% 4 2861 599 2861 679 5 0 0 0
% $2$
\put(28.6100,-6.7900){\makebox(0,0){$2$}}%
% STR 2 0 3 0 Black White
% 4 3061 599 3061 679 5 0 0 0
% $3$
\put(30.6100,-6.7900){\makebox(0,0){$3$}}%
% STR 2 0 3 0 Black White
% 4 760 1604 760 1684 5 0 0 0
% $X_{d',1}{(X_{d',0})}^{-1}=$
\put(7.6000,-16.8400){\makebox(0,0){$X_{d',1}{(X_{d',0})}^{-1}=$}}%
% BOX 2 0 3 0 Black White
% 2 1461 1365 2101 2003
% 
\special{pn 8}%
\special{pa 1461 1365}%
\special{pa 2101 1365}%
\special{pa 2101 2003}%
\special{pa 1461 2003}%
\special{pa 1461 1365}%
\special{pa 2101 1365}%
\special{fp}%
% LINE 2 0 3 0 Black White
% 10 1461 1365 1621 1205 1621 1205 2261 1205 2261 1205 2101 1365 2261 1205 2261 1844 2261 1844 2101 2003
% 
\special{pn 8}%
\special{pa 1461 1365}%
\special{pa 1621 1205}%
\special{fp}%
\special{pa 1621 1205}%
\special{pa 2261 1205}%
\special{fp}%
\special{pa 2261 1205}%
\special{pa 2101 1365}%
\special{fp}%
\special{pa 2261 1205}%
\special{pa 2261 1844}%
\special{fp}%
\special{pa 2261 1844}%
\special{pa 2101 2003}%
\special{fp}%
% STR 2 0 3 0 Black White
% 4 2421 1604 2421 1684 5 0 0 0
% $\rightarrow$
\put(24.2100,-16.8400){\makebox(0,0){$\rightarrow$}}%
% BOX 2 0 3 0 Black White
% 2 2582 1365 3222 2003
% 
\special{pn 8}%
\special{pa 2582 1365}%
\special{pa 3222 1365}%
\special{pa 3222 2003}%
\special{pa 2582 2003}%
\special{pa 2582 1365}%
\special{pa 3222 1365}%
\special{fp}%
% STR 2 0 3 0 Black White
% 4 1541 1604 1541 1684 5 0 0 0
% $0$
\put(15.4100,-16.8400){\makebox(0,0){$0$}}%
% STR 2 0 3 0 Black White
% 4 1701 1604 1701 1684 5 0 0 0
% $1$
\put(17.0100,-16.8400){\makebox(0,0){$1$}}%
% STR 2 0 3 0 Black White
% 4 2662 1604 2662 1684 5 0 0 0
% $0$
\put(26.6200,-16.8400){\makebox(0,0){$0$}}%
% STR 2 0 3 0 Black White
% 4 3062 1604 3062 1684 5 0 0 0
% $3$
\put(30.6200,-16.8400){\makebox(0,0){$3$}}%
% LINE 2 0 3 0 Black White
% 12 1781 1365 1781 2003 1621 2003 1621 1365 1621 1365 1781 1205 1781 1205 1941 1205 1941 1205 1781 1365 1941 1205 1781 1205
% 
\special{pn 8}%
\special{pa 1781 1365}%
\special{pa 1781 2003}%
\special{fp}%
\special{pa 1621 2003}%
\special{pa 1621 1365}%
\special{fp}%
\special{pa 1621 1365}%
\special{pa 1781 1205}%
\special{fp}%
\special{pa 1781 1205}%
\special{pa 1941 1205}%
\special{fp}%
\special{pa 1941 1205}%
\special{pa 1781 1365}%
\special{fp}%
\special{pa 1941 1205}%
\special{pa 1781 1205}%
\special{fp}%
% LINE 2 0 3 0 Black White
% 4 2101 1684 1941 1684 1941 1684 1781 1684
% 
\special{pn 8}%
\special{pa 2101 1684}%
\special{pa 1941 1684}%
\special{fp}%
\special{pa 1941 1684}%
\special{pa 1781 1684}%
\special{fp}%
% STR 2 0 3 0 Black White
% 4 1941 1764 1941 1844 5 0 0 0
% $2$
\put(19.4100,-18.4400){\makebox(0,0){$2$}}%
% STR 2 0 3 0 Black White
% 4 1941 1445 1941 1524 5 0 0 0
% $3$
\put(19.4100,-15.2400){\makebox(0,0){$3$}}%
% LINE 2 0 3 0 Black White
% 10 2582 1365 2742 1205 2742 1205 3382 1205 3382 1205 3222 1365 3382 1205 3382 1844 3382 1844 3222 2003
% 
\special{pn 8}%
\special{pa 2582 1365}%
\special{pa 2742 1205}%
\special{fp}%
\special{pa 2742 1205}%
\special{pa 3382 1205}%
\special{fp}%
\special{pa 3382 1205}%
\special{pa 3222 1365}%
\special{fp}%
\special{pa 3382 1205}%
\special{pa 3382 1844}%
\special{fp}%
\special{pa 3382 1844}%
\special{pa 3222 2003}%
\special{fp}%
% LINE 2 0 3 0 Black White
% 4 3062 1205 2902 1365 2902 1365 2902 2003
% 
\special{pn 8}%
\special{pa 3062 1205}%
\special{pa 2902 1365}%
\special{fp}%
\special{pa 2902 1365}%
\special{pa 2902 2003}%
\special{fp}%
% LINE 2 0 3 0 Black White
% 4 2742 2003 2742 1365 2742 1365 2902 1205
% 
\special{pn 8}%
\special{pa 2742 2003}%
\special{pa 2742 1365}%
\special{fp}%
\special{pa 2742 1365}%
\special{pa 2902 1205}%
\special{fp}%
% LINE 2 0 3 0 Black White
% 2 2902 1684 2742 1684
% 
\special{pn 8}%
\special{pa 2902 1684}%
\special{pa 2742 1684}%
\special{fp}%
% STR 2 0 3 0 Black White
% 4 2822 1764 2822 1844 5 0 0 0
% $1$
\put(28.2200,-18.4400){\makebox(0,0){$1$}}%
% STR 2 0 3 0 Black White
% 4 2822 1445 2822 1524 5 0 0 0
% $2$
\put(28.2200,-15.2400){\makebox(0,0){$2$}}%
% VECTOR 2 0 3 0 Black White
% 4 2101 2003 2261 2003 1461 1365 1461 1205
% 
\special{pn 8}%
\special{pa 2101 2003}%
\special{pa 2261 2003}%
\special{fp}%
\special{sh 1}%
\special{pa 2261 2003}%
\special{pa 2194 1983}%
\special{pa 2208 2003}%
\special{pa 2194 2023}%
\special{pa 2261 2003}%
\special{fp}%
\special{pa 1461 1365}%
\special{pa 1461 1205}%
\special{fp}%
\special{sh 1}%
\special{pa 1461 1205}%
\special{pa 1441 1272}%
\special{pa 1461 1258}%
\special{pa 1481 1272}%
\special{pa 1461 1205}%
\special{fp}%
% STR 2 0 3 0 Black White
% 4 2357 1876 2357 1956 5 0 0 0
% $1$
\put(23.5700,-19.5600){\makebox(0,0){$1$}}%
% STR 2 0 3 0 Black White
% 4 1333 1125 1333 1205 5 0 0 0
% $d$
\put(13.3300,-12.0500){\makebox(0,0){$d$}}%
% VECTOR 2 0 3 0 Black White
% 4 2100 999 2260 999 1460 360 1460 200
% 
\special{pn 8}%
\special{pa 2100 999}%
\special{pa 2260 999}%
\special{fp}%
\special{sh 1}%
\special{pa 2260 999}%
\special{pa 2193 979}%
\special{pa 2207 999}%
\special{pa 2193 1019}%
\special{pa 2260 999}%
\special{fp}%
\special{pa 1460 360}%
\special{pa 1460 200}%
\special{fp}%
\special{sh 1}%
\special{pa 1460 200}%
\special{pa 1440 267}%
\special{pa 1460 253}%
\special{pa 1480 267}%
\special{pa 1460 200}%
\special{fp}%
% STR 2 0 3 0 Black White
% 4 2356 879 2356 959 5 0 0 0
% $1$
\put(23.5600,-9.5900){\makebox(0,0){$1$}}%
\end{picture}}%
\end{center} 

\begin{proof}
Let $\langle S\rangle$ denote a subgroup generated by $S$. 
$X_{d,0}\in \langle S\rangle$.
For $i\geq 2$, the relation~(\ref{relation1}) shows that $X_{d,i}={(X_{d,0})}^{-(i-1)}X_{d,1}{(X_{d,0})}^{i-1}\in \langle S\rangle$.

Similarly, the relation~(\ref{relation1_pi}) shows that $Y_{i}={(X_{d,0})}^{-(i-3)}Y_3{(X_{d,0})}^{i-3}\in \langle S\rangle$ for $i\geq 1$, where $Y$ is $\pi$ or $\overline{\pi}$.
By the relation~(\ref{relation3}), $\overline{\pi}_0\in \langle S\rangle$.

The relation~(\ref{relation1_C}) shows that $C_{d',i}={(X_{d,0})}^{-(i-2)}C_{d',2}{(X_{d,0})}^{i-2}\in \langle S\rangle$ for $i\geq 1$.
By the relation~(\ref{relation4}), $C_{d',0}\in \langle S\rangle$.
\end{proof}

The next lemma is a generalization of Lemma~$4.2$ in \cite{Farley}.
\begin{lemma}\label{small element}
Let $g\in nV$ which acts identically on some rectangle. 
For any action of $nV$ on a tree $T$, $g$ is elliptic.
\end{lemma}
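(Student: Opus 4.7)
My plan is to argue by contradiction. Let $R$ be a rectangle on which $g$ acts as the identity, and let $H := \{h \in nV : h|_{C^n \setminus R} = \mathrm{id}\}$ be the subgroup of $nV$ consisting of elements supported in $R$. Identifying $R$ with $C^n$ via the self-similar structure of the Cantor set, one obtains an isomorphism $H \cong nV$; in particular, $H$ is infinite and simple by Brin's simplicity theorem. Because the supports of $g$ and any $h \in H$ are disjoint, $g$ and $h$ commute.

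Suppose now, for contradiction, that $g$ is hyperbolic with axis $\ell$. Since every element commuting with $g$ must preserve $\ell$, we obtain a homomorphism $\phi\colon H \to \mathrm{Isom}(\ell) \cong D_\infty$. As $H \cong nV$ is infinite, simple, and non-abelian while $D_\infty$ is virtually abelian, $\phi$ must be trivial. Therefore every $h \in H$ fixes $\ell$ pointwise; in particular, every element of $H$ is elliptic.

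To finish, I consider the kernel of the action $nV \to \mathrm{Aut}(T)$. By simplicity of $nV$, this kernel is either $\{e\}$ or $nV$; in the latter case $g$ is elliptic (acting trivially), contradicting our assumption. So the action is faithful, and hence $H$ embeds faithfully into the pointwise stabilizer of $\ell$ in $\mathrm{Aut}(T)$. Because $T$ is locally finite, this stabilizer is a profinite group, hence residually finite. But $H \cong nV$, being infinite and simple, has no non-trivial finite quotients and therefore is not residually finite --- a contradiction.

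The principal obstacle is precisely this last step: producing a genuine contradiction from the coexistence of a hyperbolic $g$ and a simple non-abelian subgroup fixing $g$'s axis pointwise. The residual-finiteness argument seems the cleanest route, although it relies on the local finiteness of $T$ (implicit in the ends-of-groups context); one could also conclude by invoking an explicit commutativity-and-fixed-point argument in the spirit of Lemma~\ref{commutativity} once enough elliptic elements in $H$ are in hand.
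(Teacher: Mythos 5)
Your setup coincides with the paper's: the commuting subgroup $H\cong nV$ supported in $R$, the induced homomorphism $\phi\colon H\to D_\infty$ coming from the action on the axis $\ell$ of a putatively hyperbolic $g$, and the conclusion (by simplicity of $H$) that $\phi$ is trivial, so that every element of $H$ fixes $\ell$ pointwise. The gap is exactly where you suspect it: your closing argument via residual finiteness of the pointwise stabilizer of $\ell$ requires $T$ to be locally finite, and that hypothesis is not available. Section~\ref{e(nV)} assumes only that $T$ is a simplicial tree, and the lemma is applied to establish property $FA$, i.e.\ to actions on arbitrary trees; the relevant Bass--Serre trees of splittings over finite subgroups of an infinite group are never locally finite. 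For a tree with vertices of infinite valence the pointwise stabilizer of a line in $\mathrm{Aut}(T)$ need not be residually finite --- it can contain an infinite symmetric group, or essentially any group --- so no contradiction is obtained, and local finiteness is not ``implicit in the context.''

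The paper closes the argument without any topology on $\mathrm{Aut}(T)$: since $g$ is the identity on $R$, its support lies in the complement of $R$, and one can choose $k\in nV$ carrying $\supp(g)$ into $R$; then $kgk^{-1}$ lies in $H$, hence fixes $\ell$ pointwise and is elliptic, whereas a conjugate of a hyperbolic element is hyperbolic. This is the one missing idea: turn the ellipticity of $H$ back onto $g$ itself by conjugation, rather than trying to contradict the existence of a large pointwise stabilizer. Your alternative suggestion of invoking Lemma~\ref{commutativity} does not close the gap either, since that lemma presupposes that both commuting elements are elliptic, which for $g$ is precisely what is to be proved.
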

 
\begin{proof}
Let $g\in nV$ be an element with a rectangle $R$ on which $g$ acts as the identity. 
Assume to the contrary that $g$ is hyperbolic. 
We write $l_g$ for the axis of $g$.
Let
\begin{align}
H_g=\{\,h\in nV \mid \supp(h)\subseteq R\,\}\cong nV.
\end{align}

For every $h\in H_g$, $hg=gh$ and $g$ acts on $h(l_g)$ as a translation. 
By the uniqueness of the axis, $h(l_g)=l_g$.
Restricting the action of $h$ on $l_g$,
we regard $h$ as an element of the infinite dihedral group $D_{\infty}$. 
In this way we obtain a homomorphism 
$\Phi:H_g\to D_{\infty}$.
By the simplicity of $H_g$, $\ker\Phi$ is $H_g$ or the trivial subgroup.
We claim that $\ker\Phi$ is not trivial.
Indeed, $H_g$ has the subgroup which is isomorphic to the Thompson group $F$.
$\ker\Phi$ contains the commutator subgroup of $H_g$, because every proper quotient of $F$ is abelian (\cite{CFP}).
Hence $\ker\Phi=H_g$.

There is $k\in nV$ such that $k\cdot \supp(g)\subseteq R$.
For this $k$, $kgk^{-1}\in H_g$.
Therefore, $kgk^{-1}$ is elliptic, which contradicts our assumption that $g$ is hyperbolic.
\end{proof} 

The following theorem is the main result.
\begin{theorem}\label{main theorem}
$nV$ has property FA. Especially, $e(nV)=1$.
\end{theorem}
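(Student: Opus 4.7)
My strategy is to apply Serre's lemma (Proposition~\ref{Serre's lemma}(3)) to the finite generating set $S$ of the preceding corollary. Thus, given any action of $nV$ on a simplicial tree $T$ without edge inversions, it suffices to show that every generator $s\in S$ and every product $s\cdot t$ with $s,t\in S$ is elliptic.

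The principal tool is Lemma~\ref{small element}: any element of $nV$ acting trivially on some rectangle of $C^n$ is elliptic. Each element of $S$ qualifies by inspection of its defining pattern: $X_{d,1}$, $C_{d',2}$, $\pi_3$, and $\overline{\pi}_3$ all restrict to the identity on $I_r=[1/2,1)\times I^{n-1}$ (since their indices are $\ge 1$ in the inductive construction), while $\pi_0$ and $X_{d,1}(X_{d,0})^{-1}$ restrict to the identity on $[0,1/4)\times I^{n-1}$, as one reads off from the figures.

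For a product $s\cdot t$, if $s$ and $t$ share a rectangle on which both act trivially, then so does $s\cdot t$, and Lemma~\ref{small element} again yields ellipticity. This settles pairs within $\{X_{d,1},C_{d',2},\pi_3,\overline{\pi}_3\}$ (common rectangle $I_r$) and within $\{\pi_0,X_{d,1}(X_{d,0})^{-1}\}$ (common rectangle $[0,1/4)\times I^{n-1}$). For $n\ge 2$, cross-coordinate mixed pairs such as $X_{1,1}\cdot X_{d,1}(X_{d,0})^{-1}$ with $d\ne 1$ acquire a shared invariant rectangle (for instance $[1/2,1)\times[0,1/4)\times I^{n-2}$) built from the two factors' invariant rectangles in distinct coordinates.

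The remaining, same-coordinate pairs such as $X_{1,1}\cdot X_{1,1}(X_{1,0})^{-1}$ admit no obvious common invariant rectangle, and handling them is the main technical obstacle. My plan is to exploit the relations of Theorem~\ref{presentation}: relation~(\ref{relation1}) gives $X_{1,1}=X_{1,0}X_{1,2}X_{1,0}^{-1}$, so the product $X_{1,1}^{2}X_{1,0}^{-1}$ is a conjugate of $X_{1,2}^{2}X_{1,0}^{-1}$ by $X_{1,0}$, and iterating the identity pushes the $X_{1,k}$ factor into deeper and deeper dyadic cells with progressively smaller support. Since ellipticity is conjugation-invariant, it suffices to eventually isolate a rectangle fixed by a conjugate of the product so that Lemma~\ref{small element} applies. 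With property FA established, the contrapositive of Serre's theorem in Section~\ref{ends1} (that $e(G)\ge 2$ forces failure of property FA) gives $e(nV)\le 1$, and since $nV$ is infinite Proposition~\ref{FH}(3) yields $e(nV)=1$.
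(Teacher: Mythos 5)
Your overall skeleton (Serre's lemma applied to the generating set $S$, plus Lemma~\ref{small element} for single generators and for products whose factors share a rectangle on which both act trivially) is exactly the paper's, and you correctly locate the difficulty in the mixed pairs. But there are genuine gaps. First, the claimed common rectangle for the ``cross-coordinate'' pairs is wrong: by the inductive definition, $X_{d,i}$ for $i\geq 1$ is built by rescaling the \emph{first} coordinate regardless of $d$, so $X_{d,1}(X_{d,0})^{-1}$ is the identity precisely on $[0,1/4)\times I^{n-1}$, which is disjoint from $I_r$; its defining pattern moves $[1/2,1)\times\cdots\times[0,1/2)\times\cdots$ (halved in the $d$-th coordinate) into $[1/4,1/2)\times\cdots$, so it is not the identity on any sub-rectangle of $I_r$ and the set $[1/2,1)\times[0,1/4)\times I^{n-2}$ does you no good. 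Second, your case analysis omits all mixed pairs between $\{C_{d',2},\pi_3,\overline{\pi}_3\}$ and $\{\pi_0, X_{d,1}(X_{d,0})^{-1}\}$, as well as $X_{d,1}\cdot\pi_0$; the paper disposes of the former by checking from the relations (\ref{relation1})--(\ref{relation2_pibar}) that the two factors commute and then invoking Lemma~\ref{commutativity} (commuting elliptic elements have a common fixed point), a tool your proposal never uses.

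Third and most seriously, the ``plan'' for the hard same-coordinate pairs cannot work as stated. Conjugation does give that $X_{1,1}^2X_{1,0}^{-1}$ is conjugate to $X_{1,k}^2X_{1,0}^{-1}$ for every $k$, but each of these words still contains the factor $X_{1,0}^{-1}$, whose support is all of $I^n$; for large $k$ the element agrees with $X_{1,0}^{-1}$ on $[2^{-k},1)\times I^{n-1}$ and hence is never the identity on any rectangle, so Lemma~\ref{small element} never becomes applicable no matter how far you push. The paper's actual argument for these pairs is different in kind: it shows that $X_{d,1}$ and $X_{d'',1}(X_{d'',0})^{-1}$ (and likewise $X_{d,1}$ and $\pi_0$) have a \emph{common fixed point}, by choosing a point $y$ fixed by two auxiliary elliptic elements, taking the shortest geodesic $[y:x]$ to the fixed tree of the other generator, and using the relations (\ref{relation1}) and (\ref{relation5}) to show that the whole geodesic, in particular $x$, is fixed by the first generator. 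This bridge argument on the tree, together with Lemma~\ref{commutativity}, is the missing ingredient; without it the proof is incomplete.
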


\begin{proof}
Let $S$ be the generating set of (\ref{generators}).
By Serre's lemma, 
it is enough to show that every element and the product of every two elements of $S$ are elliptic.
By Lemma~\ref{small element}, every element of $S$ is elliptic. 

$S=S_1\cup S_2$, where
\begin{align} 
S_1&=\{X_{d,1}, C_{d', 2}, \pi_3, \overline{\pi}_3\}_{1\leq d\leq n, 2\leq d'\leq n},\\
S_2&=\{X_{d,1}{(X_{d,0})}^{-1}, \pi_0\}_{1\leq d\leq n}.
\end{align}
Every element of $S_1$ acts as the identity on $I_r$.
Every element of $S_2$ acts as the identity on the ``left quarter'' of the unit cube, $[0, 1/4)\times I^{n-1}$. 
Therefore, Lemma~\ref{small element} shows that the product of every two elements in $S_i$ ($i=1, 2$) is elliptic. 

Next we consider 
${S_1}'=\{C_{d',2}, \pi_3, \overline{\pi}_3\}_{2\leq d'\leq n}\subset S_1$.
The relations~(\ref{relation1}), (\ref{relation1_C}) and (\ref{relation1_pi}) imply that $X_{d,1}{(X_{d,0})}^{-1}$ and $Z\in {S_1}'$ are commutative. 
In fact,
\begin{align*} 
X_{d,1}{(X_{d,0})}^{-1}Z{(X_{d,1}{(X_{d,0})}^{-1})}^{-1}Z^{-1}=(X_{d,1}({X_{d,0}}^{-1}Z{X}_{d,0}){X_{d,1}}^{-1}){Z}^{-1}
=1.
\end{align*}
The relations~(\ref{relation2_C}), (\ref{relation2_pi}) and (\ref{relation2_pibar}) say that $\pi_0$ and the elements of ${S_1}'$ are commutative. 
Thus Lemma~\ref{commutativity} shows the products of $Z\in {S_1}'$ and $X_{d,1}{(X_{d,0})}^{-1}$ or $\pi_0$ are elliptic. 

The rest of the proof is to show the following lemma.
\end{proof}

\begin{lemma}
For every $d, d''\in \{1, \ldots n\}$, 
\begin{itemize}
\item [$(1)$] $X_{d,1}$ and $X_{d'',1}{(X_{d'',0})}^{-1}$ have a common fixed point.
\item [$(2)$] $X_{d,1}$ and $\pi_0$ have a common fixed point. 
\end{itemize}
\end{lemma}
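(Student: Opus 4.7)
The plan is to combine Lemma~\ref{small element} with the classical Bass--Serre dichotomy, namely that two elliptic elements of $T$ share a fixed point if and only if their product is elliptic. Each of $X_{d,1}$, $X_{d'',1}(X_{d'',0})^{-1}$, and $\pi_0$ is individually elliptic by Lemma~\ref{small element}, their identity rectangles being $I_r$, $[0,1/4)\times I^{n-1}$, and $[0,1/4)\times I^{n-1}$ respectively. Hence the task in each case reduces to exhibiting a rectangle on which an appropriate product acts as the identity. The naive approach of intersecting the two identity rectangles directly fails, since all three regions are determined by the first coordinate, and the two distinct ones are disjoint there.

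For part~(1), I would exploit relation~(\ref{relation1}) at $(i,j)=(0,1)$, which yields
\begin{align*}
X_{d,1}&=X_{d'',0}\,X_{d,2}\,X_{d'',0}^{-1},\\
X_{d'',1}(X_{d'',0})^{-1}&=X_{d'',0}\,X_{d'',2}\,X_{d'',0}^{-2}.
\end{align*}
Conjugation by $X_{d'',0}^{-1}$ turns a common fixed point of the original pair into one of the pair $\{X_{d,2},\,X_{d'',2}X_{d'',0}^{-1}\}$, and conversely. The gain is that $X_{d,2}$ and $X_{d'',2}$ are each the identity on the larger rectangle $[1/4,1)\times I^{n-1}$, so we have more room to play with. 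Using the inductive definition $X_{d,i}|_{I_l}=\phi^{-1}X_{d,i-1}\phi$, one verifies (in the diagonal case $d=d''=1$) that $X_{1,2}$ and $X_{1,0}$ agree on the subrectangle $[0,1/16)\times I^{n-1}$, so that the product $X_{1,0}^{-1}X_{1,2}$ is the identity there; Lemma~\ref{small element} gives ellipticity of this product and then Bass--Serre, applied inside the subgroup generated by these deeper elements, delivers the required common fixed point, which is transported back by $X_{d'',0}$.

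For part~(2), I would apply relation~(\ref{relation5}) at $i=0$, namely $\pi_0 X_{d,0}=X_{d,1}\pi_0\pi_1$, rewritten as $X_{d,1}\pi_0=\pi_0 X_{d,0}\pi_1^{-1}$. The auxiliary element $\pi_1$ is itself the identity on both $[0,1/8)\times I^{n-1}$ and $I_r$, which, combined with the identity regions of $\pi_0$ and the inductive structure of $X_{d,0}$, again leaves room to locate a subrectangle on which a suitably rewritten product is the identity. From this Lemma~\ref{small element} and Bass--Serre as before produce the common fixed point of $X_{d,1}$ and $\pi_0$.

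The hard part will be the rectangle bookkeeping after the algebraic substitutions, and arranging the argument so that one never has to assume ellipticity of generators (such as $X_{d'',0}$) that are not known to be elliptic at this stage. The crucial, clean computation in part~(1)---that $X_{d,2}$ and $X_{d,0}$ coincide on a small corner rectangle near the origin---is what makes Lemma~\ref{small element} applicable to the right product; the off-diagonal case $d\neq d''$ requires replacing this with the analogous comparison between $X_{d,2}$ and $X_{d'',0}$ on a deep subrectangle, where the non-commutativity captured by relation~(\ref{relation1}) must be tracked carefully. The analogous issue in part~(2) is the careful analysis of how $\pi_0$ and $\pi_1$ permute subrectangles of $I^n$ relative to $X_{d,0}$.
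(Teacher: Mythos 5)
There is a genuine gap, and it sits exactly at the point you flag as ``the hard part.'' Your strategy rests on the dichotomy that two elliptic elements share a fixed point if and only if their \emph{product} is elliptic, with ellipticity of the product to be certified by Lemma~\ref{small element}. But the products that actually need to be handled do not act as the identity on any rectangle. Take the diagonal case of $(1)$: the relevant product is $X_{1,1}\cdot X_{1,1}X_{1,0}^{-1}=X_{1,1}^{2}X_{1,0}^{-1}$ (or, after your conjugation, $X_{1,2}^{2}X_{1,0}^{-1}$). Its germ at the left face of $I^n$ is $x_1\mapsto 2x_1$ and at the right face is $x_1\mapsto 2x_1-1$, and a direct check of the finitely many affine pieces shows it fixes no interior rectangle either; so Lemma~\ref{small element} cannot be applied to it. The element you do verify, $X_{1,0}^{-1}X_{1,2}$, is the identity on $[0,1/16)\times I^{n-1}$ as you say, but it equals $X_{1,0}^{-1}\bigl(X_{1,0}^{-1}X_{1,1}\bigr)X_{1,0}$, i.e.\ it is conjugate to $X_{1,1}X_{1,0}^{-1}$ itself; its ellipticity is therefore already known and tells you nothing about whether the \emph{pair} $\{X_{1,2},\,X_{1,2}X_{1,0}^{-1}\}$ has a common fixed point. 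The obstruction is structural: $X_{1,0}=(X_{1,1}X_{1,0}^{-1})^{-1}X_{1,1}$ has nontrivial germs at both faces, so no amount of rewriting will produce a product, equal in the group to the one you need, that is supported off a rectangle --- which is precisely why the paper cannot (and does not) reduce this lemma to Lemma~\ref{small element} alone.

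The paper's mechanism is different and is worth internalizing. For $(1)$ it chooses $y$ fixed by the auxiliary pair $X_{d'',1}$, $X_{d,2}$ (both identity on $I_r$, so Serre's lemma applies to them), assumes $\Fix(X_{d,1})\cap\Fix\bigl(X_{d'',1}X_{d'',0}^{-1}\bigr)=\emptyset$, and lets $x$ be the projection of $y$ onto $\Fix\bigl(X_{d'',1}X_{d'',0}^{-1}\bigr)$. As in the proof of Lemma~\ref{commutativity}, $x$ lies on the geodesic $[y:X_{d'',0}(y)]$, and relation~(\ref{relation1}) gives $X_{d,1}X_{d'',0}(y)=X_{d'',0}X_{d,2}(y)=X_{d'',0}(y)$, so both endpoints of that geodesic --- hence $x$ --- are fixed by $X_{d,1}$, a contradiction. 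The point is that the relation is used to show that the \emph{translate} $X_{d'',0}(y)$ of a well-chosen fixed point is again fixed, bridging the two fixed subtrees without ever certifying ellipticity of a product by an identity rectangle. Part $(2)$ is a two-stage version of the same idea: first a common fixed point of $\pi_0$ and $X_{d,0}$ via relation~(\ref{relation5}) and an auxiliary point fixed by $\pi_0,\pi_1,(X_{d,0})^{-1}X_{d,1}$, then Serre's lemma applied to $\{\pi_0,(X_{d,0})^{-1}X_{d,1},X_{d,0}\}$. Your algebraic ingredients (the conjugation identity from~(\ref{relation1}), the use of~(\ref{relation5})) are the right ones, but they must be fed into this projection argument rather than into the product-ellipticity criterion.
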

\begin{proof}

$(1)$ $X_{d'',1}$ and $X_{d,2}$ act identically on $I_r$.
By Lemma~\ref{small element} and Serre's lemma,
there exists $y\in T$ which is fixed by $X_{d'',1}$ and $X_{d,2}$.

To obtain a contradiction, suppose $X_{d,1}$ and $X_{d'',1}{(X_{d'',0})}^{-1}$ do not have a common fixed point.
Take the shortest geodesic $[y:x]$ joining $y$ to $\Fix(X_{d'',1}{(X_{d'',0})}^{-1})$. 
The composition of $(X_{d'',1}{(X_{d'',0})}^{-1})^{-1}[y:x]$ and $[y:x]$ is $[y:X_{d'',0}(y)]$. 
By the relation~(\ref{relation1}), 
\begin{align*}
X_{d,1}X_{d'',0}(y)=X_{d'',0}{(X_{d'',0})}^{-1}X_{d,1}X_{d'',0}(y)
=X_{d'',0}X_{d,2}(y)
=X_{d'',0}(y).
\end{align*}
Therefore, $[y:X_{d'',0}(y)]\subset \Fix(X_{d,1})$ and $x\in \Fix(X_{d,1})$. 
This contradicts our assumption.

$(2)$ We first show that $\pi_0$ and $X_{d,0}$ have a common fixed point. 
To obtain a contradiction, suppose $\pi_0$ and $X_{d,0}$ do not have a common fixed point.
By $(1)$, $\Fix(X_{d,0})$ is not empty.
We consider a new element ${(X_{d,0})}^{-1}X_{d,1}$, which acts as the identity on the left one-eighth of $I^n$.
Since $\pi_0$ and $\pi_1$ also act as the identity on this rectangle, we can take $y$ as a common fixed point of ${(X_{d,0})}^{-1}X_{d,1}$, $\pi_0$ and $\pi_1$.
There is the shortest geodesic $[y:x]$ joining $y$ to $\Fix(X_{d,0})$.
The composition of $[y:x]$ and $X_{d,0}([x:y])$ is $[y:X_{d,0}(y)]$. 
By the relation~(\ref{relation5}),
\begin{align*}
\pi_0 X_{d,0}(y)=X_{d,1}\pi_0\pi_1(y)
=X_{d,1}(y)
=X_{d,1}{({(X_{d,0})}^{-1}X_{d,1})}^{-1}(y)
=X_{d,0}(y).
\end{align*}
Therefore, $[y:X_{d,0}(y)]\subset \Fix(\pi_0)$ and $x\in \Fix(\pi_0)$.
This contradicts our assumption.

We consider a subgroup generated by $\{\pi_0, {(X_{d,0})}^{-1}X_{d,1}, X_{d,0}\}$.
By Serre's lemma, this subgroup has a fixed point.
Therefore, $\Fix(\pi_0)\cap \Fix(X_{d,1})$ is not empty.
\end{proof}

%---------------------------------------------------------------------------------------------------


\begin{thebibliography}{99}
  \bibitem{Bergman} 
    G.\ M.\ Bergman, \textit{On groups acting on locally finite graphs}, Ann.\ of Math.\ (2) {\bf 88}, 335--340, 1968.
  \bibitem{BL}
    C.\ Bleak and D.\ Lanoue, \textit{A family of non-isomorphism results}, Geom.\ Dedicata, {\bf 146}, 21--26, 2010.
  \bibitem{Brin}
    M.\ G.\ Brin, \textit{Higher dimensional Thompson groups}, Geom.\ Dedicata, {\bf 108}, 163--192, 2004.
  \bibitem{Brin2005}
    M.\ G.\ Brin, \textit{Presentations of higher dimensional Thompson groups}, J.\ Algebra {\bf 284}, 520--558, 2005.
  \bibitem{Brin2010}
	M.\ G.\ Brin, \textit{On the baker's map and the simplicity of the higher dimensional Thompson groups $nV$}, Publ.\ Mat.\ {\bf 54}, 433--439, 2010.
  \bibitem{CFP}
	J.\ W.\ Cannon, W.\ J.\ Floyd, and W.\ R.\ Parry, \textit{Introductory notes on Richard Thompson's groups}, Enseign.\ Math.\ (2) {\bf 42}, 215--256, 1996.
 % \bibitem{Farley2}
%	D.\ S.\ Farley, \textit{Proper isometric actions of Thompson's groups on Hilbert space}, Int.\ Math.\ Res.\ Not.\ {\bf 45}, 2409--2414, 2003.
  \bibitem{Farley}
    D.\ S.\ Farley, \textit{A proof that Thompson's groups have infinitely many relative ends}, J.\ Group Theory {\bf 14}, 649--656, 2011.
  \bibitem{Geoghegan}
    R.\ Geoghegan, Topological methods in group theory, Springer GTM, 2008.
  \bibitem{HM}
    J.\ Hennig and F.\ Matucci, \textit{Presentations for the higher-dimensional Thompson groups $nV$}, Pacific J.\ Math.\, {\bf 257}, 53--74, 2012.
  %\bibitem{Meier}
    %J.\ Meier, Groups, graphs and trees, Cambridge University Press, 2008.
  %\bibitem{NR}
    %T.\ Napier and M.\ Ramachandran, \textit{Structure theorems for complete K\"ahler manifolds and applications to Lefschetz type theorems}, Geom.\ Funct.\ Anal. {\bf 5}, 809--851, 1995.
  %\bibitem{Sageev}
	%M.\ Sageev, \textit{Ends of group pairs and non-positively curved cube complexes}, Proc.\ London Math.\ Soc.\ (3) {\bf 71}, 585--617, 1995.
  \bibitem{Serre}
    J-P.\ Serre, Trees, Springer, 1980.
   \bibitem{Stallings}
    J.\ R.\ Stallings, \textit{On torsion-free groups with infinitely many ends}, Ann.\ of Math.\ (2) {\bf 88}, 312--334, 1968.
\end{thebibliography}
\end{document}